\newcommand{\R}{\mathbb{R}}
\newenvironment{proof}{\noindent{\it Proof}\rm.}{\hfill $\Box$}
\newenvironment{proofof}[1]{\bigskip\noindent{\it Proof of~#1.}\rm}{\hfill $\Box$}
\newtheorem{theorem}{Theorem}[section]
\newtheorem{corollary}[theorem]{Corollary}
\newtheorem{proposition}[theorem]{Proposition}
\newtheorem{remarks}[theorem]{Remarks}
\newtheorem{remark}[theorem]{Remark}
\numberwithin{equation}{section}
\begin{document}

\title{Trace identities for commutators, with applications to the
distribution of eigenvalues}
\author{Evans M. Harrell II$^1$ \and Joachim Stubbe$^2$}
\date{\small
$^1$School of Mathematics,
Georgia Institute of Technology,\\
Atlanta GA 30332-0160, USA\\
{\tt harrell@math.gatech.edu}\\
$^2$EPFL, IMB-FSB\\
Station 8,\\
CH-1015 Lausanne, Switzerland\\
{\tt Joachim.Stubbe@epfl.ch}\\}

\maketitle

\begin{abstract}

We prove trace identities for commutators of operators, which are
used to derive sum rules and sharp universal bounds for the
eigenvalues of periodic Schr\"odinger operators and Schr\"odinger
operators on immersed manifolds.   In particular, we prove bounds
on the eigenvalue $\lambda_{N+1}$ in terms of the lower spectrum,
bounds on ratios of means of eigenvalues, and universal
monotonicity properties of eigenvalue moments, which imply sharp
versions of Lieb-Thirring inequalities. In the geometric context
we derive a version of Reilly's inequality bounding the eigenvalue
$\lambda_{N+1}$ of the Laplace-Beltrami operator 
on an immersed manifold of dimension $d$
by a universal
constant times $\|h\|_{\infty}^2 N^{2/d}$.


\end{abstract}

\eject
\section{Introduction}\label{intro}

In \cite{HaSt1} the authors derived ``sum rule'' identities and
sharp universal inequalities for certain self-adjoint operators
$H$, including the Dirichlet Laplacian on bounded Euclidean
domains and Schr\"odinger operators with discrete spectra.  The
essential idea was to exploit algebraic relations among the first
and second commutators of $H$ with an auxiliary self-adjoint
operator $G$. (Notationally, the commutator is given by $[H, G] :=
HG - GH$, and by the first and second commutators we refer to
$[H,G]$ and to $\left[G, \left[H, G\right]\right]$.)  In the
canonical case $H$ was of the form $- \Delta + V({\bf x})$, where
$- \Delta$ designates the Dirichlet Laplacian on a bounded domain
$\Omega$, and $G$ was chosen as a Euclidean coordinate function.
Due to the Dirichlet condition, the product of $G$ with the
eigenfunctions of $H$ remained in the domain of definition of $H$,
allowing manipulation of products of operators without much
needing to address technical questions of the domains of
definition of partial differential operators.  The trace
identities of \cite{HaSt1} were found to imply and unify several
known inequalities for the spectra of Laplacians as well as new
inequalities, which in many cases were shown to be optimal.  Among
the many more recent related articles are some establishing
connections with ``semiclassical'' spectral analysis, for example
by making connections with Lieb-Thirring inequalities and
inequalities that are sharp in the Weyl limit
\cite{HaHe1,HaHe2,Stu}.

The original intent of the present article
was to use similar methods to study the spectrum
of periodic Schr\"odinger operators, a case that has been much less considered
from the point of view of universal spectral bounds.
One reason for the lack of attention to periodic problems is that
multiplication by a coordinate function does not preserve a core
of the domain of self-adjointness of $H$, so the simple
algebraic relations in the canonical case for $[H,G]$ and
$\left[G, \left[H, G\right]\right]$ are not valid. We have
therefore sought an alternative whereby $H$ is commuted with a
family of auxiliary operators $G$ not assumed to be self-adjoint.
Indeed the case of greatest interest will be when $G$ is the
unitary operator of multiplication by $\exp(- i {\bf q} \cdot {\bf
x})$.  The second section of this article contains an abstract
trace formula of this sort (see Theorem
\ref{abstract-trace-general-theorem}), which exhibits useful
simplifications when $G$ is unitary.

The identity that forms the point of departure for the later parts
of the article 
turned out to have
the same algebraic form as one that applies to
Schr\"odinger operators on embedded manifolds with bounded mean
curvature \cite{Har}, and it was consequently possible to derive
spectral bounds in the two situations simultaneously.  This is
done in Sections 3 and \ref{UniversalRiesz} for the unperturbed
Laplacian and for Schr\"{o}dinger operators with bounded
potentials. In particular we show that Riesz means still possess a
monotonicity property similar to the one first discovered in
\cite{HaHe1} for Dirichlet Laplacians on bounded domains in
$\mathbb{R}^d$ with, however, a constant shift in its argument
depending on the geometry (see Theorem
\ref{monotone1}). In Section \ref{LTSection} we adapt the method
of \cite{Stu} to prove a universal monotonicity property of
Riesz means for periodic Schr\"{o}dinger operators and
Schr\"{o}dinger operators on manifolds of bounded mean curvature,
which implies sharp Lieb-Thirring inequalities (see Theorem
\ref{LT}). Special cases include sharp Lieb-Thirring
inequalities for Schr\"{o}dinger operators on spheres
$\mathbb{S}^d\subset \mathbb{R}^{d+1}$, which have been studied
previously with applications to Navier-Stokes equations
\cite{Il1},\cite{Il3}. In Section 6 we discuss applications of our
abstract trace identity of Theorem
\ref{abstract-trace-general-theorem} when commuting with unitary
operators which provides a new proof of the trace inequality of
\cite{Har} on manifolds of bounded mean curvature. We then prove a
new Reilly-type bound on eigenvalues optimal in the asymptotic
behavior. Finally, in Section 7 we provide some simple explicit
examples illustrating the optimality of our results, 
including some results on the distribution of
lattice points.

\section{Some trace identities and their consequences}\label{IDSection}

On a Hilbert space $\mathcal{H}$ we consider a self-adjoint
operator $H$ with domain of definition $\mathcal{D}_H$, along with
a second linear operator $G$ subject to some conditions relating
to $\mathcal{D}_H$. In many of the examples to be discussed in
this article the spectrum of $H$ consists entirely of eigenvalues
$\lambda_j$, and the corresponding eigenfunctions $\phi_j$ are
chosen to form an orthonormal basis of the underlying Hilbert
space $\mathcal{H}$. (The extension needed if $H$ has continuous
spectrum is not difficult, and has been explicitly presented in
the case where $G$ is self-adjoint in \cite{HaSt2}.) Although this
result is a special case of a more general trace identity
based only on algebraic properties of operators, we
present first of all a version assuming that $H$ has purely
discrete spectrum:

\begin{theorem}\label{AbstrTrace}
Let $H$ be a self-adjoint operator
on $\mathcal{H}$, with purely discrete spectrum.
Let $G$ be a linear operator with domain
$\mathcal{D}_G$ and adjoint $G^*$ defined on $\mathcal{D}_{G^*}$
such that $G(\mathcal{D}_H)\subseteq \mathcal{D}_H\subseteq
\mathcal{D}_G$ and $G^*(\mathcal{D}_H)\subseteq
\mathcal{D}_H\subseteq \mathcal{D}_{G^*}$, respectively.  Fix
a subset $J$ of the spectrum of $H$.  Then

\begin{equation}\label{tf2discrete}
\begin{split}
    &\;\frac1{2}\sum_{\lambda_j\in J}  (z-\lambda_j)^2\,\big(\langle[G^*,[H,G]]\phi_j,\phi_j\rangle+\langle[G,[H,G^*]]\phi_j,\phi_j\rangle\big)\\
    &-\sum_{\lambda_j\in
    J}(z-\lambda_j)\,\left(\|[H,G]\phi_j\|^2+\|[H,G^*]\phi_j\|^2\right)\\
    &=\\
    &\sum_{\lambda_j\in J}\sum_{\lambda_k\notin J}
    (z-\lambda_j)(z-\lambda_k)(\lambda_k-\lambda_j)\big(|\langle G\phi_j,\phi_k\rangle|^2+|\langle G^*\phi_j,\phi_k\rangle|^2\big).\\
    \end{split}
\end{equation}
\end{theorem}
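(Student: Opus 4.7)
The plan is to expand everything in the orthonormal eigenbasis $\{\phi_k\}$ of $H$ and reduce \eqref{tf2discrete} to an algebraic identity among scalar matrix elements, followed by a symmetry/antisymmetry cancellation. The key preliminary observation is that for any pair of eigenfunctions,
$$\langle[H,G]\phi_j,\phi_k\rangle = \langle HG\phi_j,\phi_k\rangle-\langle G\phi_j,H\phi_k\rangle = (\lambda_k-\lambda_j)\langle G\phi_j,\phi_k\rangle,$$
and likewise with $G$ replaced by $G^*$; the domain hypotheses $G(\mathcal{D}_H),G^*(\mathcal{D}_H)\subseteq\mathcal{D}_H$ are precisely what is needed to shift $H$ across the inner product.

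First I would apply Parseval to get
$$\|[H,G]\phi_j\|^2 = \sum_k(\lambda_k-\lambda_j)^2|\langle G\phi_j,\phi_k\rangle|^2,$$
and similarly for $[H,G^*]$. Next I would compute the diagonal matrix elements of the two double commutators. Expanding $[G^*,[H,G]] = G^*HG - G^*GH - HGG^* + GHG^*$, using $H\phi_j=\lambda_j\phi_j$ in the outer positions, and inserting the spectral resolution of $H$ in the sandwich terms $G^*HG = (HG)^* \cdot$ and $GHG^*$, I obtain
$$\langle[G^*,[H,G]]\phi_j,\phi_j\rangle = \sum_k(\lambda_k-\lambda_j)\bigl(|\langle G\phi_j,\phi_k\rangle|^2 + |\langle G^*\phi_j,\phi_k\rangle|^2\bigr).$$
Swapping $G \leftrightarrow G^*$ yields the same expression for $\langle[G,[H,G^*]]\phi_j,\phi_j\rangle$, so the symmetrized combination with the $\tfrac{1}{2}$ prefactor contributes a single copy.

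Setting $a_{jk} := |\langle G\phi_j,\phi_k\rangle|^2 + |\langle G^*\phi_j,\phi_k\rangle|^2$, the left-hand side of \eqref{tf2discrete} becomes
$$\sum_{\lambda_j\in J}\sum_k\bigl[(z-\lambda_j)^2(\lambda_k-\lambda_j) - (z-\lambda_j)(\lambda_k-\lambda_j)^2\bigr]a_{jk},$$
and the bracket factors as $(z-\lambda_j)(z-\lambda_k)(\lambda_k-\lambda_j)$. Splitting the inner sum into $\lambda_k\in J$ and $\lambda_k\notin J$, the diagonal piece vanishes because the coefficient is antisymmetric under $j\leftrightarrow k$ while $a_{jk}$ is symmetric (indeed $|\langle G\phi_j,\phi_k\rangle|^2 = |\langle G^*\phi_k,\phi_j\rangle|^2$ and similarly with $G,G^*$ interchanged), leaving only the off-diagonal sum, which is the right-hand side.

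The main obstacle is not the algebra but the justification of the spectral manipulations when $G$ is unbounded: one must check that the expansions of $G\phi_j$ and $G^*\phi_j$ in $\{\phi_k\}$ converge in $\mathcal{H}$, that $HG\phi_j$ can be handled term by term via $G\phi_j\in\mathcal{D}_H$, and that all double sums converge absolutely so Fubini applies. Under the stated domain conditions these checks are routine in the applications of interest, where $G$ is either a coordinate multiplication or a bounded unitary.
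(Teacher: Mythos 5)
Your proof is correct, but it takes a genuinely different route from the paper's. You expand everything directly in the eigenbasis of $H$, compute the three types of matrix elements ($\langle[G^*,[H,G]]\phi_j,\phi_j\rangle$, $\|[H,G]\phi_j\|^2$, and the cross terms), reduce the left side of \eqref{tf2discrete} to $\sum_{j\in J}\sum_k (z-\lambda_j)(z-\lambda_k)(\lambda_k-\lambda_j)a_{jk}$ with $a_{jk}=|\langle G\phi_j,\phi_k\rangle|^2+|\langle G^*\phi_j,\phi_k\rangle|^2$, and finish with the symmetry/antisymmetry cancellation over $j,k\in J$. The paper instead first proves a purely algebraic trace identity (Theorem \ref{abstract-trace-general-theorem}) for an arbitrary spectral projector $P$, using the auxiliary operators $A=(1-P)GP$, $B=PG(1-P)$ and the cyclic property of the trace; it then shifts $H\to H-z$ (Corollary \ref{H-zCor}), converts $G^*[H,G]+G[H,G^*]$ into double commutators via identity \eqref{cf1}, and only at the last step specializes to a discrete eigenbasis. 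What your approach buys is concreteness and brevity: the whole identity becomes a one-line factorization of the polynomial coefficient plus an antisymmetry argument, and each step is a transparent matrix-element computation. What the paper's approach buys is generality: Theorem \ref{abstract-trace-general-theorem} is stated for any spectral projector $P$ and does not require discrete spectrum, which is exactly what is needed later for the gap inequality (Theorem \ref{gap-inequality-thm}, whose hypothesis allows continuous spectrum above the gap) and for the Riesz-mean inequalities. Your closing remark about convergence and Fubini is the right caveat; the paper's trace formulation sidesteps some of those sum rearrangements, whereas your eigenbasis expansion makes them explicit. One small point worth stating explicitly in your argument: when you compute $\langle(HGG^*)\phi_j,\phi_j\rangle$ and $\langle(G^*GH)\phi_j,\phi_j\rangle$ you are using self-adjointness of $H$ together with $GG^*\phi_j\in\mathcal{D}_H$ (which follows from $G^*\phi_j\in\mathcal{D}_H$ and $G(\mathcal{D}_H)\subseteq\mathcal{D}_H$); this is the precise place the domain-mapping hypotheses enter, and it is a bit stronger than just being able to ``shift $H$ across the inner product.''
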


\noindent
If $G=G^*$, then \eqref{tf2discrete} reduces to a key identity
used in \cite{HaSt1,HaSt2} as a step towards the trace
identities introduced there.

In preparation for the proof of Theorem \ref{AbstrTrace} and the
presentation of the general trace identity, we collect some
straightforward algebraic identities. Let $P$ be a spectral
projector of the self-adjoint operator $H$, and define the pair of
operators $A$ and $B$ by
\begin{equation}\label{op1}
    A=(1-P)GP,\quad B=PG(1-P).
\end{equation}
We recall the standard inclusions
$\text{Ran}\,(A^*A)\subseteq \text{Ran}\,(P)$,
$\text{Ran}\,(AA^*)\subseteq \text{Ran}\,(1-P)$,
$\text{Ran}\,(B^*B)\subseteq \text{Ran}\,(1-P)$ and
 $\text{Ran}\,(BB^*)\subseteq \text{Ran}\,(P)$.

\begin{theorem} \label{abstract-trace-general-theorem} Let $H$ be a self-adjoint operator
on $\mathcal{H}$ and $P$ be a spectral projector of $H$. Let $G$
be a linear operator with domain $\mathcal{D}_G$ and adjoint $G^*$
defined on $\mathcal{D}_{G^*}$ such that
$G(\mathcal{D}_H)\subseteq \mathcal{D}_H\subseteq \mathcal{D}_G$
and $G^*(\mathcal{D}_H)\subseteq \mathcal{D}_H\subseteq
\mathcal{D}_{G^*}$, respectively. Then
\begin{equation}\label{tf1}
\begin{split}
   &
tr\,\big(H^2(G^*[H,G]+G[H,G^*])P\big)-tr\,\big(H([H,G^*][H,G]+[H,G][H,G^*])P\big)\\
   &=\\
 & tr\,\big(HA^*H^2A-HAH^2A^*\big)+tr\,\big(HBH^2B^*-HB^*H^2B\big).\\
   \end{split}
\end{equation}
\end{theorem}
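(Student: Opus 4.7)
The plan is to exploit the $P$-block decomposition of $G$ relative to the spectral projector $P$ in order to reduce both sides of \eqref{tf1} to a common form. Setting $Q := 1 - P$, I would decompose $G = PGP + A + B + QGQ$ with $A$, $B$ as in \eqref{op1}, and correspondingly $G^* = PG^*P + A^* + B^* + QG^*Q$. Because $P$ commutes with every power of the self-adjoint $H$, each commutator $[H,\,\cdot\,]$ preserves the block structure (for instance $[H,A] = Q[H,G]P$). Two elementary facts then drive the proof: (i) $[H,G]P = [H,PGP] + [H,A]$ and $[H,G^*]P = [H,PG^*P] + [H,B^*]$, since only the block-pieces of $G$ (resp.\ $G^*$) ending in $P$ survive right-multiplication by $P$; and (ii) an operator whose $P$-block form contains only off-diagonal components has vanishing trace, so under $\mathrm{tr}$ only diagonal-block products contribute.

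With these facts in hand, Step~1 is to expand the LHS of \eqref{tf1}: left-multiplying $[H,G]P$ by the four block-pieces of $G^*$ (and similarly for the other three LHS traces) produces many potential contributions, most of which either vanish outright (when $PQ=0$ or $QP=0$) or sit in off-diagonal blocks and disappear under the trace. Careful bookkeeping yields $\mathrm{LHS} = T_{PP} + T_A + T_B$, where, with $G_{PP} := PGP$,
\begin{equation*}
\begin{split}
T_{PP} &= \mathrm{tr}\bigl(H^2(G_{PP}^*[H,G_{PP}] + G_{PP}[H,G_{PP}^*])\bigr) - \mathrm{tr}\bigl(H([H,G_{PP}^*][H,G_{PP}] + [H,G_{PP}][H,G_{PP}^*])\bigr),\\
T_A &= \mathrm{tr}\bigl(H^2 A^*[H,A]\bigr) - \mathrm{tr}\bigl(H[H,A^*][H,A]\bigr),\\
T_B &= \mathrm{tr}\bigl(H^2 B[H,B^*]\bigr) - \mathrm{tr}\bigl(H[H,B][H,B^*]\bigr).
\end{split}
\end{equation*}

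Step~2 evaluates $T_A$: expanding $H[H,A^*][H,A] = H^2A^*HA - H^2A^*AH - HA^*H^2A + HA^*HAH$ and using cyclicity to identify $\mathrm{tr}(HA^*HAH) = \mathrm{tr}(HAH^2A^*)$, the terms proportional to $\mathrm{tr}(H^2A^*HA)$ and $\mathrm{tr}(H^2A^*AH)$ cancel against the corresponding terms arising from $\mathrm{tr}(H^2A^*[H,A])$, leaving $T_A = \mathrm{tr}(HA^*H^2A) - \mathrm{tr}(HAH^2A^*)$. A fully parallel computation yields $T_B = \mathrm{tr}(HBH^2B^*) - \mathrm{tr}(HB^*H^2B)$. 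Step~3 shows that $T_{PP}=0$ via the purely algebraic identity
\begin{equation*}
\mathrm{tr}\bigl(H^2(G^*[H,G] + G[H,G^*])\bigr) = \mathrm{tr}\bigl(H([H,G^*][H,G] + [H,G][H,G^*])\bigr),
\end{equation*}
which holds on any Hilbert space: expanding each commutator and applying cyclicity (notably $\mathrm{tr}(HG^*H^2G) = \mathrm{tr}(H^2GHG^*)$ and $\mathrm{tr}(HG^*HGH) = \mathrm{tr}(H^2G^*HG)$) reduces both sides to $\mathrm{tr}(H^2G^*HG) + \mathrm{tr}(H^2GHG^*) - \mathrm{tr}(H^3G^*G) - \mathrm{tr}(H^3GG^*)$. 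Applied to the restrictions of $H$ and $G_{PP}$ to $\mathrm{Ran}(P)$, this forces $T_{PP}=0$, so $\mathrm{LHS} = T_A + T_B = \mathrm{RHS}$.

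The main obstacle is the combinatorial bookkeeping of Step~1: one must enumerate the products of the sixteen block-components appearing in each of the four LHS traces and correctly identify the handful of surviving diagonal-block contributions. Once this is done, the algebra in Steps~2 and~3 is a short exercise in cyclicity of trace. A secondary technical concern is the existence of the various traces, which is implicit in the statement and in applications will follow from appropriate trace-class or finite-rank hypotheses on combinations of $G$, $P$, and functions of $H$.
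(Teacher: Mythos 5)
Your proposal is correct, and it rests on the same underlying ingredients as the paper's proof — the block structure induced by $P$ (crucially, $P$ commutes with every function of $H$), cyclicity of the trace, and the definitions of $A$ and $B$ — but it organizes the computation genuinely differently. The paper never carries out the full four-block expansion $G=PGP+A+B+QGQ$; instead it inserts $P+(1-P)$ in the middle of $tr\bigl(H^2(G^*[H,G]+G[H,G^*])P\bigr)$, shows by a single cyclicity cancellation that
\begin{equation*}
tr\bigl(H^2(G^*P[H,G]+GP[H,G^*])P\bigr)=tr\bigl(H([H,G^*]P[H,G]+[H,G]P[H,G^*])P\bigr),
\end{equation*}
and then observes that what remains of the left side of \eqref{tf1} is exactly
\begin{equation*}
tr\bigl(H^2(G^*(1-P)[H,G]+G(1-P)[H,G^*])P\bigr)-tr\bigl(H([H,G^*](1-P)[H,G]+[H,G](1-P)[H,G^*])P\bigr),
\end{equation*}
which telescopes to $tr(HG^*H(1-P)[H,G]P+HGH(1-P)[H,G^*]P)=tr\bigl(HA^*H[H,A]+HBH[H,B^*]\bigr)$. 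This is precisely your $T_A+T_B$; the paper's one-line cyclicity lemma plays the role of your ``$T_{PP}=0$'' claim. Your route makes that vanishing explicit as a standalone identity for a self-adjoint operator and an arbitrary $G$ on a single Hilbert space (applied to the restrictions to $\mathrm{Ran}(P)$), which is conceptually clean, at the price of having to enumerate and eliminate all the off-diagonal and $PQ=0$ products in Step~1. The paper's route avoids that bookkeeping but makes the cancellation less transparent. Both proofs conclude identically by expanding $tr\bigl(HA^*H[H,A]\bigr)$ and $tr\bigl(HBH[H,B^*]\bigr)$ and applying cyclicity once more to put the remainders in the form $tr(HA^*H^2A-HAH^2A^*)$ and $tr(HBH^2B^*-HB^*H^2B)$. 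Your closing caveat about trace-class hypotheses is the same tacit assumption made in the paper.
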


\begin{proof}
Using the projectors $P$ and $1-P$ we write
$$tr\,\big(H^2(G^*[H,G]+G[H,G^*])\big)$$
in the form
$$
tr\,\big(H^2(G^*P[H,G]+GP[H,G^*])P\big)+
     tr\,\big(H^2(G^*(1-P)[H,G]+G(1-P)[H,G^*])P\big),
$$
the first term of which can be computed as
\begin{equation*}
\begin{split}
     & tr\,\big(H^2(G^*P[H,G]+GP[H,G^*])P\big) \\
     &=tr\,\big(H^2G^*P(HG-GH)P+H^2GP(HG^*-G^*H)P\big)\\
&=tr\,\big(H(HG^*-G^*H)P(HG-GH)P+H(HG-GH)P(HG^*-G^*H)P\big)\\&\quad +tr\,\big(HG^*HP(HG-GH)P+HGHP(HG^*-G^*H)P\big).\\
\end{split}
\end{equation*}
The final term
in this expression
vanishes thanks to the cyclic property of the trace
({\it viz.}, $tr(AB) = tr(BA)$),
and therefore
\begin{equation*}
tr \, \big(H^2(G^*P[H,G]+GP[H,G^*])P\big)=tr\,\big(H([H,G^*]P[H,G]+[H,G]P[H,G^*])P\big).
\end{equation*}
Adding and subtracting the expression
$tr\,\big(H([H,G^*](1-P)[H,G]+[H,G](1-P)[H,G^*])P\big)$, we see
that the left side of \eqref{tf1} equals
\begin{equation*}
\begin{split}
     & \quad tr\,\big(H^2(G^*(1-P)[H,G]+G(1-P)[H,G^*])P\big)\\
     &\quad -tr\,\big(H([H,G^*](1-P)[H,G]+[H,G](1-P)[H,G^*])P\big)\\
     & =tr\,\big(HG^*H(1-P)[H,G]P+HGH(1-P)[H,G^*])P\big)\\
     & =tr\,\big(HPG^*(1-P)H(1-P)[H,G]P+HPG(1-P)H(1-P)[H,G^*])P\big)\\
     & =tr\,\big(HA^*H[H,A]+HBH[H,B^*]\big).\\
\end{split}
\end{equation*}
\end{proof}

Since commutators are not affected by replacing
$H$ by $H-z$, we have an immediate corollary:

\begin{corollary}\label{H-zCor}
 Under the assumptions of Theorem \ref{abstract-trace-general-theorem}, for all $z\in\mathbb{R}$:

\begin{equation}\label{tf2}
\begin{split}
   &
tr\,\big((z-H)^2(G^*[H,G]+G[H,G^*])P\big)\\
&\quad +tr\,\big((z-H)([H,G^*][H,G]+[H,G][H,G^*])P\big)\\
   &=\\
 & tr\,\big((z-H)A(z-H)^2A^*-(z-H)A^*(z-H)^2A\big)\\
 &\quad +tr\,\big((z-H)B^*(z-H)^2B-(z-H)B(z-H)^2B^*\big).\\
   \end{split}
\end{equation}

\end{corollary}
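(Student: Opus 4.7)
The plan is to obtain the corollary as a direct substitution in Theorem \ref{abstract-trace-general-theorem}. For any $z\in\mathbb{R}$ the shifted operator $H-z$ is self-adjoint on the same domain $\mathcal{D}_H$ and shares exactly the same spectral projectors as $H$; in particular $P$ is still a spectral projector of $H-z$, and the hypotheses relating $G, G^*$ to $\mathcal{D}_H$, $\mathcal{D}_G$, and $\mathcal{D}_{G^*}$ are untouched. Since the scalar $z$ commutes with everything, $[H-z,G]=[H,G]$ and $[H-z,G^*]=[H,G^*]$, while the auxiliary operators $A=(1-P)GP$ and $B=PG(1-P)$ depend only on $G$ and $P$. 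Every hypothesis and every commutator-containing ingredient of Theorem \ref{abstract-trace-general-theorem} therefore carries over verbatim.

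Applying Theorem \ref{abstract-trace-general-theorem} with $H$ replaced by $H-z$ thus yields the analogue of \eqref{tf1} in which each isolated factor $H$ is replaced by $H-z$ and each factor $H^2$ is replaced by $(H-z)^2$. To recast this into \eqref{tf2} I would then use the elementary identities $(H-z)^2 = (z-H)^2$ and $(H-z) = -(z-H)$. The quadratic factor contributes no sign; each linear factor contributes one. On the left this flips the minus sign separating the two traces into a plus, producing the form stated in \eqref{tf2}. On the right, each of the four traces contains exactly one linear factor of $H-z$ together with one quadratic factor, so each trace acquires a single overall minus sign, which amounts to swapping the order of the two terms inside both the $A$-difference and the $B$-difference; this is precisely the right-hand side of \eqref{tf2}.

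The only step requiring attention is a parity count of the linear factors of $H-z$, that is, verifying that each summand on both sides contains an odd or even total number of $(H-z)$ factors as needed. No new domain questions, approximation arguments, or spectral hypotheses enter, since everything beyond this bookkeeping was already addressed in Theorem \ref{abstract-trace-general-theorem}.
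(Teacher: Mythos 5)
Your proposal is correct and is essentially the paper's own argument. The paper states the corollary with the one-line remark that ``commutators are not affected by replacing $H$ by $H-z$'' and leaves the substitution $H\mapsto H-z$, together with the sign bookkeeping from $(H-z)=-(z-H)$, to the reader; you have spelled out exactly that substitution and the parity count, and your sign accounting (one sign flip on the second left-hand trace, one on each right-hand trace, amounting to a swap of terms in the $A$- and $B$-differences) is accurate.
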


\begin{proofof}{Theorem \ref{AbstrTrace}}
We may write the first trace in Corollary \ref{H-zCor} in terms of
second commutators by applying the following algebraic identity,
which is a direct computation:

\begin{equation}\label{cf1}
G^*[H,G]+G[H,G^*]=\frac1{2}[G^*, [H,G]] + \frac1{2}[G, [H,G^*]]+\frac1{2}[H,GG^*+G^*G].
\end{equation}

When \eqref{cf1} is multiplied by $P$ and the trace is taken, the
last term vanishes, and for the left side of \eqref{tf1} we obtain

\begin{equation}\label{tf3}
tr\,\big(H^2(G^*[H,G]+G[H,G^*])P\big)=\frac1{2}tr\,\big(H^2([G^*, [H,G]]+[G, [H,G^*]])P\big).
\end{equation}

If the spectrum of $H$ consists only of eigenvalues $\lambda_j$, with an
orthonormal basis of eigenfunctions
$\left\{\phi_j\right\}$, then the trace identity \eqref{tf3} and Corollary \ref{H-zCor} imply
\begin{equation}\nonumber
\begin{split}
    &\;\frac1{2}\sum_{\lambda_j\in J}  (z - \lambda_j)^2\,\big(\langle[G^*,[H,G]]\phi_j,\phi_j\rangle+\langle[G, [H,G^*]]\phi_j,\phi_j\rangle\big)\\
    &-\sum_{\lambda_j\in
    J}(z-\lambda_j)\,\big(\langle[H,G]\phi_j,[H,G]\phi_j\rangle+\langle[H,G^*]\phi_j,[H,G^*]\phi_j\rangle\big)\\
    &=\\
    &\sum_{\lambda_j\in J}\sum_{\lambda_k\notin J}
    (z-\lambda_j)(z-\lambda_k)(\lambda_k-\lambda_j)\big(|\langle G\phi_j,\phi_k\rangle|^2+|\langle G^*\phi_j,\phi_k\rangle|^2\big),\\
    \end{split}
\end{equation}
establishing \eqref{tf2discrete}.
\end{proofof}

If $H$ has a gap in its spectrum, then we consider the spectral
projector $P$ that separates the two parts of the spectrum.
\begin{theorem}\label{gap-inequality-thm}
Let $G,H$ satisfy the assumptions of Theorem
\ref{abstract-trace-general-theorem}. Suppose there exist
constants $\lambda<\Lambda$ such that
\begin{equation}\label{gap-condition}
    HP\leq \lambda< \Lambda\leq  H(1-P).
\end{equation}
Then for all $z\in[\lambda,\Lambda]$, 
\begin{equation}\label{gap-inequality}
    \begin{split}
   &
tr\,\big((z-H)^2(G^*[H,G]+G[H,G^*])P\big)\\
&\quad +tr\,\big((z-H)([H,G^*][H,G]+[H,G][H,G^*])P\big)\\
   &\quad \quad \leq\\
 & tr\,\big((G^*[H,G]+G[H,G^*])P\big)(z-\lambda)(z-\Lambda).\\
   \end{split}
\end{equation}
\end{theorem}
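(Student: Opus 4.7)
The plan is to derive \eqref{gap-inequality} from Corollary \ref{H-zCor} by a pointwise scalar comparison in the spectral variables of $H$. The left-hand side of \eqref{gap-inequality} is identical to the left-hand side of \eqref{tf2}, so by Corollary \ref{H-zCor} it suffices to bound the trace expression on the right-hand side of \eqref{tf2} from above by $(z-\lambda)(z-\Lambda)\, tr\,((G^*[H,G]+G[H,G^*])P)$.

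Assume first that $H$ has pure point spectrum with eigenbasis $\{\phi_j\}$, and set $J := \{j : \lambda_j \leq \lambda\}$; by the gap condition \eqref{gap-condition}, $P$ is the orthogonal projector onto $\mathrm{span}\{\phi_j : j \in J\}$ and $\lambda_k \geq \Lambda$ for $k \notin J$. Expanding each of the four traces on the right-hand side of \eqref{tf2} in this eigenbasis, precisely as in the passage from Corollary \ref{H-zCor} to \eqref{tf2discrete} in the proof of Theorem \ref{AbstrTrace}, gives
\[
\text{RHS of }\eqref{tf2} \;=\; \sum_{j \in J}\sum_{k \notin J}(z-\lambda_j)(z-\lambda_k)(\lambda_k-\lambda_j)\bigl(|\langle G\phi_j,\phi_k\rangle|^2 + |\langle G^*\phi_j,\phi_k\rangle|^2\bigr).
\]
A parallel computation, together with the antisymmetry of $(\lambda_k-\lambda_j)(|\langle G\phi_j,\phi_k\rangle|^2+|\langle G^*\phi_j,\phi_k\rangle|^2)$ under $j\leftrightarrow k$ within $J\times J$, yields
\[
tr\bigl((G^*[H,G]+G[H,G^*])P\bigr) \;=\; \sum_{j\in J}\sum_{k\notin J}(\lambda_k-\lambda_j)\bigl(|\langle G\phi_j,\phi_k\rangle|^2+|\langle G^*\phi_j,\phi_k\rangle|^2\bigr).
\]

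The comparison which closes the argument is elementary. For $z\in[\lambda,\Lambda]$, $\lambda_j\leq\lambda$ and $\lambda_k\geq\Lambda$ imply $0\leq z-\lambda\leq z-\lambda_j$ and $0\leq \Lambda-z\leq \lambda_k-z$, whence $(z-\lambda_j)(z-\lambda_k)\leq (z-\lambda)(z-\Lambda)\leq 0$. Since the remaining factor $(\lambda_k-\lambda_j)(|\langle G\phi_j,\phi_k\rangle|^2+|\langle G^*\phi_j,\phi_k\rangle|^2)$ is non-negative, a term-by-term bound followed by summation produces exactly $(z-\lambda)(z-\Lambda)\, tr\,((G^*[H,G]+G[H,G^*])P)$, which is \eqref{gap-inequality}.

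The main obstacle is the general case of possibly continuous spectrum, which is what the theorem actually assumes. Here I would represent the right-hand side of \eqref{tf2} as an integral of $(z-\mu)(z-\nu)(\nu-\mu)$ against a non-negative measure on $(-\infty,\lambda]\times[\Lambda,\infty)$ built from the joint action of the spectral resolution of $H$ on the matrix elements of $G$ and $G^*$; the same pointwise scalar inequality $(z-\mu)(z-\nu)\leq(z-\lambda)(z-\Lambda)$ then applies, and the argument is formally identical to the discrete case. The measure-theoretic details, while standard, require some care to ensure all traces are absolutely convergent under the implicit hypotheses of the theorem.
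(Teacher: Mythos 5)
Your argument is correct in the pure point case, and the computations check out: the eigenbasis expansion of the right side of \eqref{tf2} agrees with \eqref{tf2discrete}, the symmetrization argument showing that $tr\bigl((G^*[H,G]+G[H,G^*])P\bigr)=\sum_{j\in J}\sum_{k\notin J}(\lambda_k-\lambda_j)\bigl(|\langle G\phi_j,\phi_k\rangle|^2+|\langle G^*\phi_j,\phi_k\rangle|^2\bigr)$ is sound (the $J\times J$ contribution cancels by antisymmetry of $\lambda_k-\lambda_j$ against the symmetric weight), and the scalar comparison $(z-\lambda_j)(z-\lambda_k)\leq(z-\lambda)(z-\Lambda)\leq 0$ combined with the nonnegativity of $(\lambda_k-\lambda_j)$ on $J\times J^c$ closes the estimate. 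In spirit this is the same bound the paper proves, but you implement it by expanding in an eigenbasis while the paper stays at the operator level: it uses the range inclusions of $A^*A$, $AA^*$, $B^*B$, $BB^*$ together with $(z-H)P\geq(z-\lambda)P\geq 0$ and $(z-H)(1-P)\leq(z-\Lambda)(1-P)\leq 0$ and cyclicity of the trace to bound each of the four traces on the right side of \eqref{tf2}, arriving at $(z-\Lambda)(z-\lambda)\,tr\bigl((z-H)[A^*,A]+(z-H)[B,B^*]\bigr)$, and then identifies this last trace with $tr\bigl((G^*[H,G]+G[H,G^*])P\bigr)$ by comparing coefficients of $z^2$ on both sides of \eqref{tf2} (using $tr\,[A^*,A]=tr\,[B,B^*]=0$, so the expression is in fact $z$-independent). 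Your scalar inequality is exactly the pointwise content of those operator inequalities.

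The place where the two versions genuinely diverge is the one you flag yourself: the paper's operator-level argument never expands in eigenfunctions, so it requires only that $HP$ have discrete spectrum and applies unchanged when $H(1-P)$ has continuous spectrum --- precisely the situation singled out in the remark following the theorem. Your eigenbasis expansion requires pure point spectrum for all of $H$, and your sketch of the continuous case (a spectral measure on $(-\infty,\lambda]\times[\Lambda,\infty)$) is plausible but would have to be carried out, including convergence of the relevant off-diagonal integrals, whereas the paper's route makes those issues moot. So: a correct proof of the discrete case, a recognized gap in the stated generality, and an implementation choice (eigenbasis vs.\ operator inequalities) that is what creates the gap.
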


\begin{remark}
While this formula only makes sense if the spectrum of
$H P$ is discrete, it is not necessary for the whole spectrum of
$H$ to be discrete.
\end{remark}

\begin{proofof}{Theorem \ref{gap-inequality-thm}}
We bound each term of the right side of \eqref{tf2}. Since
$z \in[\lambda,\Lambda]$ and $\text{Ran}\,(A^*A)\subseteq
\text{Ran}\,(P)$, with the cyclic property of traces
we get
\begin{equation*}
\begin{split}
    tr\,\big((z-H)A(z-H)^2A^*\big)&\leq
    (z-\Lambda)\;tr\,\big(A(z-H)^2A^*\big)\\
    &=(z-\Lambda)(z-\lambda)\;tr\,\big((z-H)^2A^*A\big)\\
    &\leq (z-\Lambda)(z-\lambda)\;tr\,\big((z-H)A^*A\big).
    \end{split}
\end{equation*}
Since $\text{Ran}\,(AA^*)\subseteq \text{Ran}\,(1-P)$, we obtain
similarly
\begin{equation*}
    -tr\,\big((z-H)A^*(z-H)^2A\big)\leq (z-\Lambda)(z-\lambda)\;tr\,\big((z-H)AA^*\big),
\end{equation*}
and therefore
\begin{equation*}
    tr\,\big((z-H)A(z-H)^2A^*-(z-H)A^*(z-H)^2A\big)\leq (z-\Lambda)(z-\lambda)\;tr\,\big((z-H)[A^*,A]\big).
\end{equation*}
In the same manner we estimate the second trace in \eqref{tf2}, obtaining
\begin{equation*}
\begin{split}
     &tr\,\big((z-H)A(z-H)^2A^*-(z-H)A^*(z-H)^2A\big)\\
     &\;+tr\,\big((z-H)B^*(z-H)^2B-(z-H)B(z-H)^2B^*\big)\\
     &\quad \quad \leq \\
     &(z-\Lambda)(z-\lambda)\;tr\,\big((z-H)[A^*,A]+(z-H)[B,B^*]\big)\\
     \end{split}
\end{equation*}

\noindent
Comparing the coefficients of $z^2$ in Corollary \ref{H-zCor}, we
see that
\begin{equation*}
    tr\,\big((G^*[H,G]+G[H,G^*])P\big)=tr\,\left((z-H)[A^*,A]+(z-H)[B,B^*]\right),
\end{equation*}
which proves the theorem.
\end{proofof}

If we take $P=P_{H<z} \,$, the spectral projector onto the
spectrum below $z$, then we can rewrite \eqref{gap-inequality} as
follows:
\begin{equation}\label{Riesz-mean-inequality}
    \begin{split}
   &
tr\,\big((z-H)_{+}^2(G^*[H,G]+G[H,G^*])\big)\\
&+tr\,\big((z-H)_{+}([H,G^*][H,G]+[H,G][H,G^*])\big) \leq 0,\\
   \end{split}
\end{equation}
where $(z - H)_+ := (z - H) P_{H<z}$. We extend this inequality to
the class of trace-controllable functions $f$ of \cite{HaSt2}: Let
$f$ be a $C^3$ function such that $f(0)=f'(0)=f''(0)=0$ and
$f'''(t)\geq 0$ for $t\geq 0$. From the identity
\begin{equation}\label{f}
    f(z-\lambda)=\frac1{2}\int_0^{\infty}(z-\lambda-t)_{+}^2f'''(t)\;dt
\end{equation}
we deduce the following inequality.
\begin{theorem}\label{f-gap-inequality-theorem} Let $G,H$ satisfy the assumptions of Theorem
\ref{abstract-trace-general-theorem} and let $f$ be as above. Then
\begin{equation}\label{f-gap-inequality}
    \begin{split}
   &
tr\,\big(f((z-H)_{+})(G^*[H,G]+G[H,G^*])\big)\\
&+\frac1{2}tr\,\big(f'((z-H)_{+})([H,G^*][H,G]+[H,G][H,G^*])\big) \leq 0.\\
   \end{split}
\end{equation}

\end{theorem}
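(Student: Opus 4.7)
The plan is to integrate the Riesz-mean inequality \eqref{Riesz-mean-inequality} (which is the case $f(u)=u_+^2$ of the desired statement) against the non-negative density $\tfrac12 f'''(t)\,dt$, and to recognize the resulting operator functions of $(z-H)_+$ as $f$ and $\tfrac12 f'$ via the integral representation \eqref{f}.

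More concretely, I would first note that \eqref{Riesz-mean-inequality} holds for every real $z$, being just Theorem \ref{gap-inequality-thm} with the choice $P=P_{H<z}$, $\lambda=\Lambda=z$. Substituting $z\mapsto z-t$ for a parameter $t\geq 0$ gives, for each such $t$,
\begin{equation*}
    tr\bigl(((z-t)-H)_+^2\,(G^*[H,G]+G[H,G^*])\bigr)+tr\bigl(((z-t)-H)_+\,([H,G^*][H,G]+[H,G][H,G^*])\bigr)\leq 0.
\end{equation*}
Multiplying by $\tfrac12 f'''(t)\geq 0$ and integrating over $t\in[0,\infty)$ preserves this inequality. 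By the functional calculus of the self-adjoint operator $(z-H)_+$, combined with \eqref{f} and its $z$-derivative $f'(u)=\int_0^\infty(u-t)_+f'''(t)\,dt$, the two operator-valued scalar integrals
\begin{equation*}
    \tfrac12\int_0^\infty((z-t)-H)_+^2\,f'''(t)\,dt=f((z-H)_+),\qquad \tfrac12\int_0^\infty((z-t)-H)_+\,f'''(t)\,dt=\tfrac12 f'((z-H)_+)
\end{equation*}
are precisely the factors that should multiply $G^*[H,G]+G[H,G^*]$ and $[H,G^*][H,G]+[H,G][H,G^*]$, respectively, in the statement \eqref{f-gap-inequality}.

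The only delicate point, and the one I expect will require the most care, is the interchange of the scalar $t$-integration with the operator trace. This is precisely what the ``trace-controllable'' hypothesis of \cite{HaSt2} is designed to handle: under it, the traces appearing on the left of \eqref{f-gap-inequality} are absolutely convergent, and Fubini (applied in the spectral representation of $H$, so that the sum over eigenvalues $\lambda_j<z$ and the $t$-integral may be freely exchanged) legitimates the swap of $\int_0^\infty\cdots dt$ with $tr$. Once this is justified, the argument above produces \eqref{f-gap-inequality} directly, and the proof is complete.
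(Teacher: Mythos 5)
Your proposal is correct and coincides with the paper's intended argument: the paper itself gives no separate proof of this theorem, merely the one-line indication that it is ``deduced from the identity \eqref{f}'' applied to \eqref{Riesz-mean-inequality}, which is exactly your scheme of replacing $z$ by $z-t$, weighting by $\tfrac12 f'''(t)\,dt\ge 0$, integrating, and using \eqref{f} together with $f'(u)=\int_0^\infty (u-t)_+f'''(t)\,dt$ to identify the resulting operators as $f((z-H)_+)$ and $\tfrac12 f'((z-H)_+)$. Your remark that the trace--integral interchange is the only delicate step, justified by the trace-controllability hypothesis of \cite{HaSt2} that the paper invokes when introducing the class of admissible $f$, is likewise in line with the paper's framing.
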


\section{On the eigenvalues of periodic Schr\"odinger operators}\label{PeriodicSection}

In this section we suppose that $H$ is of the form

\begin{equation}\label{Hdef}
H = - \Delta + V({\bf x})
\end{equation}
and is defined as a self-adjoint operator on
$L^2(\Omega)$, where $\Omega \subset \R^d$ is a bounded domain
and the boundary conditions are such that the multiplication operator
$G = \exp(-i {\bf q} \cdot {\bf x})$ satisfies the domain-mapping conditions
of Theorem \ref{AbstrTrace}.  This situation arises in the Floquet decomposition of
$H$ when $V({\bf x})$ is a real, periodic, bounded measurable function
\cite{Kuc,OdKe,ReSiIV,Skr},
where $\Omega$ is a fundamental domain of periodicity and
${\bf q}$ is a vector of the reciprocal lattice.  It also covers the case of the
Dirichlet Laplacian, with the same $G$, the vector ${\bf q}$ being arbitrary.
Commutators with $\exp(-i {\bf q})$
are at the heart of the ``Bethe sum rule'' of quantum mechanics
\cite{BeJa,Wan} and
have appeared in some other analyses
of the distribution of eigenvalues in \cite{Her,Lap,LePa,Saf}, although
the specific
consequences for universal bounds for eigenvalues of
periodic operators have not, to our knowledge,
been explored before.

In Section \ref{LTSection}
we shall
introduce a semiclassical parameter $\alpha$ proportional to
the square of Planck's constant, and study
\begin{equation}\label{HSemi}
H_{\alpha} = - \alpha \Delta + V({\bf x}).
\end{equation}
Although it is always possible to reset $\alpha > 0$
to 1 by a change of scale, we introduce $H_{\alpha}$ in order to
study the semiclassical limit
$\alpha \to 0$.
A further possible extension would be to
introduce of a magnetic field through the systematic replacement
of $\nabla$ by $\nabla + i {\bf A}({\bf x})$; this entails only minor changes,
because in the key identities
the magnetic vector potential ${\bf A}({\bf x})$
occurs only in commutators
that vanish.  In the interest of clarity we leave this generalization as an exercise for the
interested reader.

The commutators appearing in Theorem \ref{AbstrTrace} are easily calculated:

\begin{equation}\label{1stcomm}
\left[H,G\right]=  \exp(-i {\bf q} \cdot {\bf x})\left(|{\bf q}|^2 + 2 i {\bf q} \cdot \nabla\right)
\end{equation}
and
\begin{equation}\label{GHG}
\left[G^*,\left[H,G\right]\right] = \left[G,\left[H,G^*\right]\right] = 2 |{\bf q}|^2.
\end{equation}
With these facts in hand, Theorem \ref{AbstrTrace} reads

\begin{equation}\nonumber
\begin{split}
    &\;2 |{\bf q}|^2 \sum_{\lambda_j\in J}  (z-\lambda_j)^2
    -\sum_{\lambda_j\in
    J}(z-\lambda_j)\,\left(2 |{\bf q}|^4 + 8 \|{\bf q} \cdot \nabla \phi_j\|^2 \right)\\
    &=\\
    &\sum_{\lambda_j\in J}\sum_{\lambda_k\notin J}
    (z-\lambda_j)(z-\lambda_k)(\lambda_k-\lambda_j)\big(|\langle G\phi_j,\phi_k\rangle|^2+|\langle G^*\phi_j,\phi_k\rangle|^2\big),\\
    \end{split}
\end{equation}
or
\begin{equation}\label{PerID}
\begin{split}
    &|{\bf q}|^2 \sum_{\lambda_j\in J}  (z-\lambda_j)^2
    -\sum_{\lambda_j\in
    J}(z-\lambda_j)\,\left(|{\bf q}|^4 + 4 \|{\bf q} \cdot \nabla \phi_j\|^2 \right)\\
    &=\\
    &\sum_{\lambda_j\in J}\sum_{\lambda_k\notin J}
    (z-\lambda_j)(z-\lambda_k)(\lambda_k-\lambda_j)w_{jk{\bf q}}\, ,\\
    \end{split}
\end{equation}

\noindent
where $w_{kj{\bf q}} := \frac{1}{2} \left(|\langle\exp(-i {\bf q} \cdot {\bf x})\phi_j,\phi_k\rangle|^2+|\langle \exp(i {\bf q} \cdot {\bf x})\phi_j,\phi_k\rangle|^2\right)$.
We collect here some properties of $w_{kj{\bf q}}$ and associated ``sum rules'' for the
eigenvalues:

\begin{proposition}\label{wprops}
The quantities $w_{kj{\bf q}}$ compose
an infinite doubly stochastic matrix, i.e.,

a)\quad\quad\quad\quad $\sum_k{w_{kj{\bf q}}} = \sum_j{w_{kj{\bf q}}} = 1$,

\noindent
with the symmetries

b)\quad\quad\quad\quad
$0 \le w_{kj{\bf q}} = w_{jk{\bf q}}$;
and

c)\quad\quad\quad\quad
$w_{kj-{\bf q}} = w_{kj{\bf q}}$.

\noindent
Moreover,
\end{proposition}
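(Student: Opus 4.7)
The plan is to verify the three stated properties directly from the definition
$w_{kj\mathbf{q}} = \frac{1}{2}\bigl(|\langle e^{-i\mathbf{q}\cdot\mathbf{x}}\phi_j,\phi_k\rangle|^2 + |\langle e^{i\mathbf{q}\cdot\mathbf{x}}\phi_j,\phi_k\rangle|^2\bigr)$, using only orthonormality of $\{\phi_j\}$, the unitarity of $G=\exp(-i\mathbf{q}\cdot\mathbf{x})$ as an operator on $L^2(\Omega)$ (so that $G^* = G^{-1}$ and $\|G\psi\| = \|G^*\psi\| = \|\psi\|$), and elementary manipulations of inner products. The boundary conditions assumed for $H$ in Theorem \ref{AbstrTrace} already ensure that $G$ maps the domain of $H$ to itself, and as a multiplication operator by a unimodular function it is automatically an isometry on $L^2(\Omega)$.

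For (a), I would apply Parseval's identity to the orthonormal basis $\{\phi_k\}$:
$$\sum_k |\langle G\phi_j,\phi_k\rangle|^2 = \|G\phi_j\|^2 = 1,$$
and analogously for $G^*$, which yields $\sum_k w_{kj\mathbf{q}} = 1$. For the row sum, I rewrite $\langle G\phi_j,\phi_k\rangle = \overline{\langle G^*\phi_k,\phi_j\rangle}$ and then apply Parseval in the index $j$, obtaining $\sum_j |\langle G\phi_j,\phi_k\rangle|^2 = \|G^*\phi_k\|^2 = 1$, and similarly for the other term.

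For (b), non-negativity is immediate since $w_{kj\mathbf{q}}$ is a sum of squared moduli. The symmetry follows from the conjugation identity $|\langle G\phi_j,\phi_k\rangle|^2 = |\langle G^*\phi_k,\phi_j\rangle|^2$: swapping $j\leftrightarrow k$ in the first term of the definition of $w_{kj\mathbf{q}}$ produces the second term (and vice versa), so the symmetrized sum is invariant. For (c), replacing $\mathbf{q}$ by $-\mathbf{q}$ simply interchanges $G$ and $G^*$, again permuting the two terms in the definition and leaving $w_{kj\mathbf{q}}$ unchanged.

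There is no substantive obstacle here; each property reduces to a one-line computation. The only point requiring attention is the verification that $G$ is indeed unitary on $L^2(\Omega)$ under the stated boundary conditions, which is what justifies both the Parseval computation and the identity $G^{**} = G$ used for the symmetry. Once this is noted, (a), (b), (c) follow in a few lines, and the proposition is established.
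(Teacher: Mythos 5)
Your proof is correct and follows essentially the same route as the paper's, which simply remarks that (a)--(c) are immediate from the definition of $w_{kj\mathbf{q}}$ together with completeness of the eigenfunctions; your Parseval computation and the use of unitarity of $G=\exp(-i\mathbf{q}\cdot\mathbf{x})$ just make that one-line remark explicit.
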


\begin{equation}\label{M1}
\sum_k{(\lambda_k - \lambda_j)w_{kj{\bf q}}} = |{\bf q}|^2,
\end{equation}
{\it and in particular, for each $j$,}
\begin{equation}\label{M1a}
\lambda_j = \sum_k{\lambda_k w_{kj{\bf q}}} - |{\bf q}|^2;
\end{equation}
\begin{equation}\label{M2}
\sum_k{(\lambda_k - \lambda_j)^2 w_{kj{\bf q}}} = |{\bf q}|^4 + 4 \|{\bf q} \cdot \nabla \phi_j\|^2 .
\end{equation}

\begin{proof}
Properties a)-c) are immediate from the definition of $w_{kj{\bf q}}$ and the
completeness relation of the eigenfunctions.

Choosing $J = \{\lambda_j\}$,
Identity \eqref{M1} results from
taking the second derivative of
\eqref{PerID} with respect to $z$.  Formula \eqref{M1a} is just a reformulation
of \eqref{M1}.  For \eqref{M2}, set $z = \lambda_k$, multiply \eqref{PerID}
by $w_{kj{\bf q}} \,$, and then sum on $j$.

\end{proof}

In the spirit of \cite{HaSt1,HaHe1}, we next exploit \eqref{PerID}
to obtain control over eigenvalues and their means.  For
$J = \{\lambda_j\}$, we find
$$(z-\lambda_j)^2-q_{\ell}^2 (z-\lambda_j)- 4 (z-\lambda_j)T_{{\bf q} j}
= H_{{\bf q} j},$$
where
$$T_{{\bf q} j}:=\frac{\|{\bf q}\cdot\nabla u_j\|^2}{q_{\ell}^2} \quad {\rm and} \quad
H_{{\bf q} j}:=\sum_k
{(z-\lambda_i)(z-\lambda_k)(\lambda_k-\lambda_j) \frac{w_{kj{\bf q}}}{q_{\ell}^2}}.
$$
For $J=\{\lambda_1,\dots,\lambda_N\}$, we sum in $j$,
defining
$$\overline{\lambda_N} := {\frac 1 N }\sum_{j
\le N}{\lambda_j}$$
and
$$
\overline{\lambda_N^2} := {\frac 1 N}
\sum_{j \le N}\lambda_{j}^2
$$
to write \eqref{PerID} as

\begin{equation}\label{R_2}
\sum_{j=1}^N(z-\lambda_j)^2 = N(z^2- 2 \overline{\lambda_N} \, z+
\overline{\lambda^2_N})
=q_{\ell}^2 \sum_{j=1}^N{(z-\lambda_j)} +
4 \sum_{j=1}^N{(z-\lambda_j)T_{{\bf q} j} } + H,
\end{equation}
where
$$H :=
\sum_{j=1}^N\sum_{k=N+1}^\infty
{(z-\lambda_j)(z-\lambda_k)(\lambda_k-\lambda_j)\frac{w_{kj{\bf q}}}{q_{\ell}^2}} \le 0$$
for $z\in [\lambda_N,\lambda_{N+1}]$.  (The contribution to the sum
for $k \le N$ has dropped out by symmetry.)

We wish next to let ${\bf q}$ range over the dual lattice and average;
in order to concentrate on the most
straightforward cases, we henceforth make two simplifying assumptions:

1.  The fundamental domain of periodicity $K$ of $V({\bf x})$ is rectangular, with
sides of length $2 \pi/ q_{\ell}$; and

2.  Periodic boundary conditions are imposed on functions in $L^2(K)$.  This incidentally allows us to choose the eigenfunctions $\{\phi_j\}$ to be real-valued, with consequent simplifications for
expressions such as $w_{kj{\bf q}}$.

\noindent
(If one considers the problem of a general Floquet multiplier for a periodic problem with a fundamental cell that is not rectangular,
then there are some complications of detail, but the results
remain similar to the ones presented here.)  With the simplifying assumptions the
basis of the dual lattice can be taken as consisting of multiples of the
Cartesian basis, and in \eqref{PerID} we may set
${\bf q}= q_{\ell} \hat e_\ell$, $\ell=1,\dots,d$.
With this choice $T_{{\bf q} j} = \|\frac{\partial\phi_j}{\partial x_\ell}\|^2$, and
\begin{equation}\label{KE}
\sum_{\ell = 1}^d{ T_{{\bf q} j}} =  \| \nabla \phi_j \|^2
= \lambda_j - \left\langle{\phi_j , V \phi_j }\right\rangle =: \lambda_j - V_j =: T_j.
\end{equation}
Let $g := \frac{1}{d} \sum_{\ell=1}^d{q_{\ell}^2}$, and define the ``Riesz means''
$$R_{\sigma}(z) := \sum_j{(z-\lambda_j)_+^{\sigma}}.$$
Then \eqref{R_2} can be summed on $\ell$, so that

\begin{equation}\label{difference1}
R_2(z) \le g R_1(z) + \frac{4}{d} \sum_j{(z - \lambda_j) T_j}.
\end{equation}
(In the case of periodicity with respect to basis vectors
$\{{\bf q}_{\ell}\}$ that are not orthogonal,
the factor $T_j$ may be replaced by $C T_j$ for a constant $C \ge 1$
determined by
the geometry of the set $\{{\bf q}_{\ell}\}$.)
Because of \eqref{KE}, this inequality is equivalent to the statement that
a certain quadratic polynomial, {\it viz.},

\begin{equation}\label{quadpoly2}
z^2 - \left(\left(2 + \frac 4 d \right) \overline{\lambda_N} + g -
\frac 4 d \overline{V_N}\right) z +
\left(1 + \frac 4 d \right) \overline{\lambda_N^2} + g \overline{\lambda_N} -
\frac 4 d \overline{\lambda V_N} \le 0
\end{equation}
for all $z \in \left[\lambda_N, \lambda_{N+1}\right]$.
Here, in keeping with the notation for the averages of
eigenvalues and their squares,
$\overline{V_N} := {\frac 1 N }\sum_{j
\le N}{V_j}$ and $\overline{\lambda V_N} := {\frac 1 N }\sum_{j
\le N}{\lambda_j V_j}$.  Letting $z = \lambda_{N+1}$, we obtain
a universal inequality on $\lambda_{N+1}$ to be compared with
a similar result for the Dirichlet Laplacian in \cite{Yan}, cf. also
Proposition 6 of \cite{HaSt1}:

\begin{equation}\label{upperbd}
\lambda_{N+1} \le
\left(1 + \frac 2 d \right) \overline{\lambda_N} + \frac{g - \overline{V_N}}{2} +
\sqrt{{\mathfrak D}_N},
\end{equation}
where ${\mathfrak D}_N$, defined as
the discriminant of the quadratic polynomial in \eqref{quadpoly2},
is guaranteed to be $\ge 0$.

%

For the problem of Schr\"odinger operators on bounded domains with Dirichlet
conditions we may let $g \to 0$, and if moreover $V = 0$, then \eqref{difference1}
reduces to the inequality of Yang for the Dirichlet Laplacian \cite{Yan,HaSt1,Ash}.

\section{Universal monotonicity of Riesz means for periodic Schr\"odinger
operators and Schr\"odinger operators on manifolds of bounded mean
curvature}\label{UniversalRiesz}

Inequality \eqref{difference1} is identical in form to a bound that
applies for a suitable value of
$g$ to Schr\"odinger operators on
immersed manifolds of dimension $d$,
according to Corollary 4.3 of \cite{Har},
which was proved there with
a different commutator argument.
 (In \cite{Har} refer to (1.10) and (4.2) to elucidate the notation,
and note that the $n$'s in
the denominators in Corollary 4.3 are incorrect and should be
deleted.)
The inequality of \cite{Har}
can equally well be
proved with the methods of this article, as will be shown in
Section \ref{UnitSection}.
It is therefore possible to derive monotonicity properties and
eigenvalue bounds simultaneously for these two 
categories of Schr\"odinger operator.  In the earlier article, the additional
term comes from the mean curvature of an immersed hypersurface, and thus reflects the way in
which the manifold can be immersed in Euclidean space.  The periodic problem could be similarly regarded as about Schr\"odinger operator on a flat torus, but if instead of making arguments based on the fundamental domain one embedded the torus in $\R^{d+1}$, then one would
gain an additional term in the trace inequality from the associated mean curvature rather than from
the basis of the dual lattice, and would effectively convert the situation of Section \ref{PeriodicSection}
into that of
\cite{Har}.  We refer to \cite{EHI} for extensions of \cite{Har} in various directions of geometric
interest.

We shall refer to an operator $H = - \Delta + V({\bf x})$ or
$H_{\alpha}=- \alpha \Delta + V({\bf x})$ as a {\it Schr\"odinger
operator on a manifold $M$ of bounded mean curvature} when
$\Omega\subset M$ is a domain in a smooth closed manifold $M$
immersed with finite mean curvature $h :=
\sum_{\ell=1}^d{\kappa_{\ell}}$ in $\R^{d+1}$, Dirichlet
conditions being imposed on $\partial \Omega$ if it is nonempty,
and the potential $V({\bf x})$ is a real, periodic, bounded
measurable function. Setting $\alpha=1$, by Corollary 4.3 of
\cite{Har} we get \eqref{difference1} with $g =
\frac{\sup(h)^2}{d}$. (An independent proof of this is given
below, cf. \eqref{GeomIneq}.)

As in \cite{HaHe1},
because $R_2^{\prime}(z) = 2 R_1(z)$,
the difference inequality \eqref{difference1} for the periodic problem is equivalent to a differential inequality
in the variable $z$:

\begin{equation}\label{differential1}
\left(z + \frac{g d}{4} \right) R_2^{\prime}(z) \ge \left(2 + \frac{d}{2} \right) R_2(z) +
2 \sum_j {(z-\lambda_j)_+V_j}.
\end{equation}

Simplifying by replacing $V_j$ by $\sup V$ and letting $\tau := g d/4 - \sup V$, we
obtain the inequality

\begin{equation}\label{differential2}
\left(z + \tau \right) R_2^{\prime}(z) \ge \left(2 + \frac{d}{2} \right) R_2(z) .
\end{equation}

A Schr\"odinger operator $H$ on a manifold of bounded mean
curvature similarly satisfies \eqref{differential2} with $\tau :=
\frac{\sup(h)^2}{4} - \sup V$. The differential inequality
\eqref{differential2} is easily solved, and we have thus proved:

\begin{theorem}\label{monotone1}
Let $H=-\Delta + V({\bf x})$ be a periodic Schr\"odinger operator
with fundamental domain $M$ or a Schr\"odinger operator on a
bounded manifold $M$ of bounded mean curvature and finite volume.
Then the function
$$
\frac{R_2(z)}{(z+\tau)^{2+d/2}},
$$
with $\tau = \frac{g d}{4} - \sup V$ or respectively $\frac{\sup(h)^2}{4} - \sup V$, 
is nondecreasing for
all $z$ real.  Consequently,

$$
\frac{R_2(z)}{(z+\tau)^{2+d/2}} \le L_{2,d}^{\rm cl} {\rm Vol}(M).
$$
where
$L_{\sigma,d}^{cl}=(4\pi)^{-\frac{d}{2}}\frac{\Gamma(3)}{\Gamma(3+\frac{d}{2})}$.
\end{theorem}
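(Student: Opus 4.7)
The plan is to convert the already-established differential inequality \eqref{differential2} into a statement about the derivative of the quotient $Q(z) := R_2(z)/(z+\tau)^{2+d/2}$, and then identify $\lim_{z \to \infty} Q(z)$ with $L_{2,d}^{\mathrm{cl}}\,\mathrm{Vol}(M)$ by appealing to the Weyl asymptotic for Riesz means.

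First, I would apply the quotient rule on the region $z+\tau>0$ to obtain
\begin{equation*}
\frac{d}{dz}Q(z)=\frac{(z+\tau)R_2'(z)-(2+d/2)R_2(z)}{(z+\tau)^{3+d/2}}.
\end{equation*}
By inequality \eqref{differential2}, the numerator is nonnegative, so $Q$ is nondecreasing on $(-\tau,\infty)$. Outside that region (if nonempty) one has $z+\tau\leq 0$; since $R_2$ vanishes identically below the ground state $\lambda_1$, $Q$ is zero there (or else $Q$ is understood as vanishing by continuity), so the monotonicity extends trivially.

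Next I would compute $\lim_{z\to\infty}Q(z)$. For a Schr\"odinger operator on a bounded manifold of finite volume with bounded potential, the standard Weyl law for the Riesz mean of order two yields
\begin{equation*}
R_2(z)=\sum_{j}(z-\lambda_j)_+^{2}\;\sim\;L_{2,d}^{\mathrm{cl}}\,\mathrm{Vol}(M)\,z^{2+d/2}\qquad(z\to\infty),
\end{equation*}
with $L_{2,d}^{\mathrm{cl}}=(4\pi)^{-d/2}\Gamma(3)/\Gamma(3+d/2)$. Because $(z+\tau)^{2+d/2}\sim z^{2+d/2}$ in the same limit, this shows $\lim_{z\to\infty}Q(z)=L_{2,d}^{\mathrm{cl}}\,\mathrm{Vol}(M)$. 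Combining this with the monotonicity of $Q$ proved above gives the stated upper bound for every real $z$.

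The calculus step and the passage from the limit to the bound are straightforward; the only nontrivial ingredient is the semiclassical/Weyl asymptotic. For a Schr\"odinger operator on a bounded Euclidean domain with bounded potential this is classical, and in the periodic and immersed-manifold settings under consideration the analogous asymptotic is available in the literature (with the volume of the fundamental domain, respectively the Riemannian volume of $M$, entering the constant). The main obstacle is therefore simply to invoke the correct version of this asymptotic in each geometric context; the monotonicity half of the theorem is a one-line consequence of \eqref{differential2}.
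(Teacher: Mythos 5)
Your proposal is correct and follows essentially the same route as the paper: the paper obtains Theorem \ref{monotone1} precisely by ``solving'' the differential inequality \eqref{differential2}, which is exactly your quotient-rule computation, and the upper bound is obtained by letting $z\to\infty$ and invoking the Weyl asymptotics for $R_2$ (which the paper leaves implicit but records as \eqref{sc-limit} in Section 5). Your remark that the region $z+\tau\le 0$ is harmless is also fine, since there \eqref{differential2} together with $R_2'=2R_1\ge 0$ forces $R_2(z)=0$.
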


\begin{corollary}\label{Leg1}
For $k \ge j$, the means of the eigenvalues of
$H$ satisfy

\begin{equation}\label{LegP8}
    \frac{d+2}{d}\;(\overline{\lambda_k} + \tau)
    \leq
    \bigg(\frac{k}{n}\bigg)^{\frac{2}{d}}\left(\frac{d+2}{d}\;(\overline{\lambda_n} + \tau)+\sqrt{D_n}\right),
\end{equation}
where
\begin{equation}\label{Disc}
D_n := \left(1 + \frac 2 d \right)^2 (\overline{\lambda_n} + \tau)^2 -
\left(1 + \frac 4 d \right) \left(\overline{\lambda_n}^2 + 2 \overline{\lambda_n} \tau
+ \tau^2\right).
\end{equation}
Consequently,
\begin{equation}\label{LegP9}
\frac{\overline{\lambda_{k}} + \tau}{\overline{\lambda_{j}} + \tau} \le
\frac{d + 4}{d + 2}\left( {\frac k j}\right)^{\frac 2 d}.
\end{equation}
\end{corollary}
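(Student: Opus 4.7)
The plan is to combine Theorem \ref{monotone1} with the piecewise--quadratic structure of $R_2$ on the eigenvalue intervals. Put $\mu_N := \overline{\lambda_N}+\tau$, $\sigma_N^2 := \overline{\lambda_N^2}-\overline{\lambda_N}^2 \ge 0$, and $y := z+\tau$. On $[\lambda_N,\lambda_{N+1}]$ the Riesz mean has the explicit form $R_2(z) = N\bigl[(y-\mu_N)^2+\sigma_N^2\bigr]$, and the polynomial inequality \eqref{quadpoly2} rewrites cleanly as
\begin{equation*}
y^2 - \tfrac{2(d+2)}{d}\mu_N\,y + \tfrac{d+4}{d}\bigl(\mu_N^2+\sigma_N^2\bigr) \le 0, \qquad y \in [\lambda_N+\tau,\lambda_{N+1}+\tau],
\end{equation*}
with roots $y_\pm^{(N)} = \tfrac{d+2}{d}\mu_N \pm \sqrt{D_N}$. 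The right endpoint then recovers \eqref{upperbd}, namely $\lambda_{N+1}+\tau \le y_+^{(N)}$.

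Next I would analyze the auxiliary function $\tilde P_N(y) := N\bigl[(y-\mu_N)^2+\sigma_N^2\bigr]/y^{2+d/2}$, whose derivative vanishes precisely at $y_\pm^{(N)}$ and which attains its global maximum at $y_+^{(N)}$. Using the polynomial-at-root identity $(y_+^{(N)}-\mu_N)^2+\sigma_N^2 = \tfrac{4\,y_+^{(N)}(y_+^{(N)}-\mu_N)}{d+4}$ one computes
\begin{equation*}
\max_{y>0}\tilde P_N(y) = \frac{4N\,(y_+^{(N)}-\mu_N)}{(d+4)(y_+^{(N)})^{1+d/2}}.
\end{equation*}
Since $R_2(z) \ge N\bigl[(y-\mu_N)^2+\sigma_N^2\bigr]$ for every $z \ge \lambda_N$, the monotonicity of $\Phi(z) := R_2(z)/(z+\tau)^{2+d/2}$ from Theorem \ref{monotone1} then forces $\Phi(z) \ge \max_y \tilde P_n$ at every $z$ with $z+\tau \ge y_+^{(n)}$.

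For $k \ge n$, picking $z \in [\lambda_k,\lambda_{k+1}]$ with $z+\tau \ge y_+^{(n)}$ gives $\tilde P_k(z+\tau) = \Phi(z) \ge \max_y \tilde P_n$. Combining this lower bound with the upper bound $(y-\mu_k)^2+\sigma_k^2 \le \tfrac{4}{d}\bigl[\mu_k(y-\mu_k)-\sigma_k^2\bigr]$ from the polynomial inequality on the $k$-th interval, and then optimizing in $y = z+\tau$ (again invoking the polynomial-at-root identity), produces the bound $\tfrac{d+2}{d}\mu_k \le (k/n)^{2/d}\,y_+^{(n)}$, which is precisely \eqref{LegP8}. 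Inequality \eqref{LegP9} will then follow at once from \eqref{LegP8}: the nonnegativity of $(1+\tfrac{4}{d})\sigma_n^2$ yields $D_n \le \tfrac{4}{d^2}\mu_n^2$, hence $\sqrt{D_n} \le \tfrac{2}{d}\mu_n$ and $y_+^{(n)} \le \tfrac{d+4}{d}\mu_n$; substituting in \eqref{LegP8} and dividing by $\tfrac{d+2}{d}\mu_n$ produces the ratio bound with constant $\tfrac{d+4}{d+2}$. The main obstacle is the optimization that extracts the sharp exponent $2/d$ in \eqref{LegP8}: it requires a careful balance between the polynomial-at-root identity and the upper bound on $(y-\mu_k)^2+\sigma_k^2$, together with a case analysis depending on where $y_+^{(n)}-\tau$ falls in the spectrum $\{\lambda_j\}$ (since the range of admissible $y$ in the $k$-th interval may be empty for intermediate $k$ between $n$ and the index containing $y_+^{(n)}-\tau$).
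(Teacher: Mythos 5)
Your setup matches the paper's: reduce to $\tau=0$, pass to the polynomial $P_{2,n}$, identify its largest root $z^0_n = y_+^{(n)}$, and use the monotonicity of $\Phi(z)=R_2(z)/(z+\tau)^{2+d/2}$ together with $R_2(z)\ge\sum_{j\le n}(z-\lambda_j)^2$ to obtain the uniform lower bound $\Phi(\zeta)\ge \tilde P_n(y_+^{(n)})$ for $\zeta+\tau\ge y_+^{(n)}$. The paper, however, does \emph{not} close the argument with $R_2$ on the $k$-th eigenvalue interval. Instead it converts the $R_2$ bound into an $R_1$ bound via the differential inequality $R_2(\zeta)\le \frac{\zeta}{1+d/4}R_1(\zeta)$ (a consequence of \eqref{differential2}), obtaining $\zeta^{-1-d/2}R_1(\zeta)\ge\frac{n}{1+d/2}(z^0_n)^{-d/2}$, and then applies a Legendre transform in $\zeta$. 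The Legendre transform of $R_1(\zeta)=\sum(\zeta-\lambda_j)_+$ is exactly $(w-[w])\lambda_{[w]+1}+\sum_{j\le[w]}\lambda_j$, while the transform of the power $c\,\zeta^{1+d/2}$ yields $c'\,w^{1+2/d}$; taking $w\to k$ delivers $\frac{d+2}{d}\,\overline{\lambda_k}\le(k/n)^{2/d}z^0_n$ directly, with the sharp constant. That Legendre step is the engine of the proof and is absent from your plan.

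The step where you propose to close the loop with $R_2$ alone has a genuine gap. You want to combine the lower bound on $\tilde P_k(y)$ with the polynomial upper bound $(y-\mu_k)^2+\sigma_k^2\le\frac{4}{d}[\mu_k(y-\mu_k)-\sigma_k^2]$ and ``optimize in $y$,'' invoking the polynomial-at-root identity. But that identity holds at $y=y_+^{(k)}$, and by \eqref{upperbd} one has $y_+^{(k)}\ge\lambda_{k+1}+\tau$, so $y_+^{(k)}$ lies \emph{outside} the admissible interval $[\lambda_k+\tau,\lambda_{k+1}+\tau]$ on which $\tilde P_k(y)=\Phi(y-\tau)$. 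On the admissible interval you only have the polynomial inequality, not its equality form, and you have no control of the endpoints $\lambda_k,\lambda_{k+1}$ in terms of $\mu_k$ alone that would let the optimization produce the clean constant $\frac{d+2}{d}$ and exponent $2/d$. (Bounding $\mu_k(y-\mu_k)\le y^2/4$, for instance, gives a bound on $\lambda_k$ with a worse constant, not on $\mu_k$ with the stated one.) You also note but do not resolve the case where $[\lambda_k+\tau,\lambda_{k+1}+\tau]\cap[y_+^{(n)},\infty)$ is empty, which does occur for $k$ just above $n$. Your final reduction of \eqref{LegP9} from \eqref{LegP8} (discard $(1+\frac4d)\sigma_n^2\ge0$ to get $\sqrt{D_n}\le\frac2d\mu_n$) is correct and agrees with \eqref{z0n} in the paper. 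To repair the proof, replace the $R_2$-only optimization by the passage to $R_1$ and the Legendre transform, as in the text.
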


\begin{remarks}\label{BetterHarHer}

\end{remarks}

a)
The more appealing bound \eqref{LegP9} is strictly weaker than
\eqref{LegP8}, which has the virtue of being sharp in the Weyl
limit.

b)
The corollary applies in particular to the Dirichlet Laplacian
on a bounded domain, with $\tau = 0$.  In this case it improves
a recent inequality of \cite{HaHe2} both in terms of the constant and in
the range of $j,k$ for which it is valid.

c)
A similar monotonicity theorem can be proved for
$$
\frac{R_{\sigma}(z)}{(z+\tau)^{\sigma+d/2}}
$$
with $\sigma > 2$ as in \cite{HaHe1}).

\begin{proof}
It suffices to prove the corollary
assuming that $\tau = 0$, as the effect on the eigenvalues
of adding $\tau$ to
$z$ is
equivalent to a systematic shift of $\tau$ in each eigenvalue.
For any positive integer $n$ we consider the function
$P_{2,n}:[\lambda_n,\infty) \rightarrow \mathbb{R}_{+}$
defined by
\begin{equation}\label{P-sigma-n}
    P_{2,n}(z):=
    \sum_{j=1}^{n}(z-\lambda_j)\left(z-\left(1+\frac{4}{d}\right)\lambda_j\right).
\end{equation}
From \eqref{upperbd} we can see that
$ P_{2,n}(z)\leq 0$ for all
$z\in (\lambda_n,\lambda_{n+1})$.
As a consequence $P_{2,n}(z)$ has a
largest zero $z^0_n \geq \lambda_n$. Since for all $z\geq
\lambda_n$,
\begin{equation*}
    R_{2}(z)\geq \sum_{j=1}^n(z-\lambda_j)^{2},
\end{equation*}
and as in  \cite{HaHe1} we conclude from \eqref{differential2} that
for all $\zeta\geq z \geq \lambda_n$,
\begin{equation}\label{R-sigma-ineq}
    \zeta^{-2-\frac{d}{2}}R_{2}(\zeta)\geq
    z^{-2-\frac{d}{2}}\sum_{j=1}^n(z-\lambda_j)^{2}.
\end{equation}
We want to optimize the
right side of \eqref{R-sigma-ineq} with respect to $z$.
Since
\begin{equation}\label{R-P-relation}
    \frac{d}{dz}\;z^{-2-\frac{d}{2}}\sum_{j=1}^n(z-\lambda_j)^{2}=-\frac{d}{2}\;P_{2,n}(z)z^{-2-\frac{d}{2}-1},
\end{equation}
an optimal choice is $z^0_n$, where
\begin{equation}\label{z0n}
    z^0_n=\frac{d+2}{d}\overline{\lambda_n}+\sqrt{D_n} \le \frac{d+4}{d}\overline{\lambda_n},
\end{equation}
in which
$D_n$ is the discriminant of the quadratic.  (See \cite{HaSt1} for further details.)
Hence for all
$\zeta\geq z^0_n$,
\begin{equation*}
    \zeta^{-2-\frac{d}{2}}R_{2}(\zeta)\geq (z^0_n)^{-2-\frac{d}{2}}\sum_{j=1}^n(z^0_n-\lambda_j)^{2}
    = \frac{(z^0_n)^{1-2-\frac{d}{2}}}{1+\frac{d}{4}}\sum_{j=1}^n(z^0_n-\lambda_j).
\end{equation*}
Since $R'_{2}(\zeta)=2 R_1(\zeta)$, it follows
from \eqref{differential2} that
\begin{equation*}
    R_{2}(\zeta)\leq
    \frac{\zeta}{1+\frac{d}{4}}R_1(\zeta).
\end{equation*}
Consequently, for all $\zeta\geq z^0_n$,
\begin{equation}\label{R-sigma-main-ineq}
    \zeta^{-1-\frac{d}{2}}R_1(\zeta)\geq  (z^0_n)^{-1-\frac{d}{2}}\sum_{j=1}^n(z^0_n -\lambda_j).
\end{equation}
Since $z^0_n-\overline{\lambda_n} \geq\frac{2}{d+2}z^0_n$,
\begin{equation}\label{R-1-main-ineq}
    \zeta^{-1-\frac{d}{2}}R_{1}(\zeta)\geq  \frac{n}{1+\frac{d}{2}}\;(z^0_n)^{-\frac{d}{2}}.
\end{equation}
We note parenthetically that
estimate \eqref{R-1-main-ineq} is asymptotically sharp, since
\begin{equation*}
    \underset{\zeta\rightarrow\infty}{\lim}\zeta^{-1-\frac{d}{2}}R_{1}(\zeta)=\frac{|\Omega|}{1+\frac{d}{2}}\; C_d^{-\frac{d}{2}}=
     \underset{n\rightarrow\infty}{\lim}\frac{n}{1+\frac{d}{2}}\;(z^0_n)^{-\frac{d}{2}},
\end{equation*}
where $C_d$ denotes the classical constant given by the Weyl limit,
\begin{equation*}
    C_d=\underset{n\rightarrow\infty}{\lim}\lambda_n(\frac{n}{|\Omega|})^{-\frac{2}{d}}
    = (2 \pi)^2 {\rm Vol}(S^{d})^-{2/d}.
\end{equation*}
We now rewrite \eqref{R-1-main-ineq} as
\begin{equation*}
    R_{1}(\zeta)\geq  \frac{n}{1+\frac{d}{2}}\;(z^0_n)^{-\frac{d}{2}}\zeta^{1\frac{d}{2}}
\end{equation*}
and take the Legendre transformation on both sides, following
standard calculations
to be found, e.g., in \cite{LaWe,HaHe1}.  The result is that if
$w$ is restricted to values $\geq n$, then
\begin{equation}\label{R-1-LegendreTrans-ineq}
    (w-[w])\lambda_{[w]+1}+\sum_{k=1}^{[w]}\lambda_k\leq \frac{z^0_n}{(1+\frac{2}{d})n^{\frac{2}{d}}}\;w^{1+\frac{2}{d}}.
\end{equation}
Hence for all $k\geq n$ (letting $w$ approach $k$ from below) we
get
\begin{equation}
    \frac{d+2}{d}\;\overline{\lambda_k}\leq \bigg(\frac{k}{n}\bigg)^{\frac{2}{d}}z^0_{2
    ,n}=\bigg(\frac{k}{n}\bigg)^{\frac{2}{d}}\left(\frac{d+2}{d}\;\overline{\lambda_n}+\sqrt{D_n}\right),
\end{equation}
which proves the theorem.  (The simplification
\eqref{LegP9} is achieved with the upper bound in \eqref{z0n}.)

\end{proof}

\section{Universal monotonicity of eigenvalue moments and sharp Lieb-Thirring inequalities for periodic Schr\"odinger
operators and Schr\"odinger operators on manifolds of bounded
mean curvature}\label{LTSection}

We next turn our attention to the one-parameter family of operators
$H_{\alpha}$ from \eqref{HSemi} in order to derive inequalities of
Lieb-Thirring type
for periodic Schr\"odinger operators and for Schr\"odinger
operators on domains of bounded mean curvature. Some inequalities of
Lieb-Thirring type appear in \cite{Il1,Il2,Il3} for Schr\"odinger
operators on spheres after projection onto 
the set of functions of mean zero,
and Sobolev type inequalities related to Lieb-Thirring may be
found in \cite{Tem}.  We shall use the direct method introduced in
\cite{Stu} to derive an explicit form of a Lieb-Thirring
inequality for eigenvalue moments of order $\sigma\geq 2$, without
projection.

For the purposes of semiclassical analysis, we appeal to the
Feynman-Hellman theorem to note that
\begin{equation}\label{FeynHel}
T_j = \frac{\partial \lambda_j}{\partial \alpha}
\end{equation}
(except at eigenvalue crossings, cf. \cite{Stu}), and therefore,
after scaling to incorporate
$\alpha$ and introducing an integrating factor,
\eqref{difference1} reads

\begin{equation}\label{scaleddifferential}
\frac{\partial}{\partial \alpha} \left(\alpha^{d/2} R_2(z, \alpha)\right) \le
\frac{g d}{2} \alpha^{d/2} R_1(z, \alpha).
\end{equation}

Recalling that $\partial R_2/\partial z = 2 R_1$, we see that
\eqref{scaleddifferential} can be regarded as a partial differential inequality.
Letting $U(z, \alpha) := \alpha^{d/2} R_2(z, \alpha)$, the inequality has the form

\begin{equation}\label{PDI}
\frac{\partial U}{\partial \alpha}  \le \frac{g d}{4} \frac{\partial U}{\partial z},
\end{equation}
which can be solved by changing to characteristic variables
$\xi := \alpha - \frac{4}{g d} z$,
$\eta := \alpha + \frac{4}{g d} z$, in terms of which
$$
\frac{\partial U}{\partial \xi}  \le 0,
$$
i.e., $U$ decreases as $\xi$ increases while $\eta$ is fixed.  In conclusion,

\begin{equation}\label{UIneq}
U(\alpha, z)  \le U\left(\alpha_s, z+ \frac{g d}{4} (\alpha - \alpha_s)\right)
\end{equation}
for $\alpha \ge \alpha_s$.

As $\alpha_s \to 0$, the right side of \eqref{UIneq} tends to
$L_{2,d}^{cl} \int{\left|V({\bf x}) - \left(z + \frac{g d}{4}
\alpha\right)\right|_-^{2 + d/2} d{\bf x} }$.
Since (see e.g. \cite{BiSo}, \cite{BlSt}, \cite{Hu}, \cite{RoSo}
and references therein)
for all $\sigma\geq 0$,
\begin{equation}\label{sc-limit}
    \underset{\alpha\rightarrow 0+}{\lim}\alpha^{\frac{d}{2}}\;
    \sum_{E_j(\alpha)<z}(z-E_j(\alpha))^{\sigma}=L_{\sigma,d}^{cl}\int_M{\left(V({\bf x}) -
z\right)_{-}^{\sigma + d/2} d{\bf x}},
\end{equation}
with $L_{\sigma,d}^{cl}$, the {\it classical constant}, given by
\begin{equation}\label{sc-constant}
    L_{\sigma,d}^{cl}=(4\pi)^{-\frac{d}{2}}\frac{\Gamma(\sigma+1)}{\Gamma(\sigma+\frac{d}{2}+1)},
\end{equation}
we arrive at a sharp Lieb-Thirring inequality  for $R_2$:

\begin{theorem}\label{LT}
For all $\alpha>0$ the mapping
\begin{equation}\label{R-mononoticity-periodic}
    \alpha\mapsto {\alpha}^{\frac{d}{2}}R_2(z-\frac{\alpha gd}{4})={\alpha}^{\frac{d}{2}}\sum
    (z-\frac{\alpha gd}{4}-\lambda_j)_{+}^2
\end{equation}
is nonincreasing and therefore for all $z\in\mathbb{R}$ and all
$\alpha>0$ the following sharp Lieb-Thirring inequality holds:
\begin{equation}\label{LTPeriodic}
R_2(z, \alpha)
\le \alpha^{-d/2} L_{2,d}^{cl} \int_M{\left(V({\bf x}) - \left(z +
\frac{gd}{4} \alpha\right)\right)_{-}^{2 + d/2}d{\bf x}}.
\end{equation}

\end{theorem}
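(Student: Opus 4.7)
The plan is to convert the scaled form of \eqref{difference1} into a first-order linear partial differential inequality in the variables $(z,\alpha)$, integrate along its characteristics to obtain the monotonicity, and then pass to the classical limit $\alpha \to 0^+$ to extract the Lieb--Thirring bound from the Weyl asymptotics.

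First I would apply \eqref{difference1} to $H_\alpha = -\alpha \Delta + V$, which introduces an explicit factor of $\alpha$ in front of both terms on the right. The essential ingredient is then the Feynman--Hellmann identity $T_j(\alpha) = \partial \lambda_j / \partial \alpha$ of \eqref{FeynHel}, by means of which
\begin{equation*}
\sum_{\lambda_j < z} (z-\lambda_j)\, T_j(\alpha) \;=\; -\tfrac{1}{2}\,\frac{\partial R_2}{\partial \alpha}.
\end{equation*}
Substituting this into the scaled inequality and multiplying by the integrating factor $\alpha^{d/2-1}$ yields exactly \eqref{scaleddifferential}. Using in addition $\partial R_2/\partial z = 2 R_1$ then shows that the function $U(z,\alpha) := \alpha^{d/2} R_2(z,\alpha)$ satisfies the linear partial differential inequality \eqref{PDI}, namely $U_\alpha \le (gd/4)\, U_z$.

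Next I would integrate along the characteristics $z + (gd/4)\alpha = \mathrm{const}$. Parameterising such a line as $\alpha \mapsto (z_0 - (gd/4)\alpha,\, \alpha)$, the chain rule and \eqref{PDI} give
\begin{equation*}
\frac{d}{d\alpha}\, U\!\left(z_0 - \tfrac{gd}{4}\alpha,\,\alpha\right) \;=\; U_\alpha - \tfrac{gd}{4} U_z \;\le\; 0,
\end{equation*}
so this function is nonincreasing in $\alpha$. Identifying $z_0$ with the spectral parameter $z$ in the theorem, this is exactly the monotonicity asserted in \eqref{R-mononoticity-periodic}. Reparameterising for $0 < \alpha_s < \alpha$ and any $z \in \mathbb{R}$,
\begin{equation*}
\alpha^{d/2}\, R_2(z,\alpha) \;\le\; \alpha_s^{d/2}\, R_2\!\left(z + \tfrac{gd}{4}(\alpha - \alpha_s),\, \alpha_s\right),
\end{equation*}
which is the inequality \eqref{UIneq}.

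Finally, I would send $\alpha_s \to 0^+$ on the right-hand side. By the semiclassical limit \eqref{sc-limit} with $\sigma = 2$, together with the continuity of $z^* \mapsto \int_M (V({\bf x}) - z^*)_-^{2+d/2}\, d{\bf x}$, the limit equals
\begin{equation*}
L_{2,d}^{\mathrm{cl}}\, \int_M \bigl(V({\bf x}) - (z + \tfrac{gd}{4}\alpha)\bigr)_-^{2 + d/2}\, d{\bf x},
\end{equation*}
and dividing by $\alpha^{d/2}$ yields \eqref{LTPeriodic}. The main technical obstacle is the Feynman--Hellmann step itself: the eigenvalue branches $\lambda_j(\alpha)$ are only piecewise smooth in $\alpha$ and may cross, so pointwise differentiation of individual branches is not available. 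However, because $R_2$ is symmetric under relabelling of eigenvalues, the identity used above holds in the almost-everywhere (distributional) sense sufficient for the characteristic integration, as already carried out in \cite{Stu}. Sharpness is then automatic: the monotonicity forces the bound to be saturated asymptotically in the semiclassical regime $\alpha \to 0^+$, matching the classical Lieb--Thirring constant $L_{2,d}^{\mathrm{cl}}$.
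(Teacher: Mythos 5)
Your proposal is correct and follows essentially the same route as the paper: scale \eqref{difference1} for $H_\alpha$, use Feynman--Hellmann to rewrite the $T_j$-term as $-\tfrac12\,\partial R_2/\partial\alpha$, introduce the integrating factor $\alpha^{d/2-1}$ to get \eqref{scaleddifferential}, pass via $\partial R_2/\partial z = 2R_1$ to the partial differential inequality \eqref{PDI}, integrate along characteristics to obtain \eqref{UIneq}, and send $\alpha_s\to 0^+$ using the Weyl-type limit \eqref{sc-limit}. The only cosmetic difference is that you parameterize the characteristic lines $z+\tfrac{gd}{4}\alpha=\text{const}$ directly and apply the chain rule, while the paper makes the change of variables $(\xi,\eta)=(\alpha-\tfrac{4}{gd}z,\,\alpha+\tfrac{4}{gd}z)$, and you spell out the caveat at eigenvalue crossings that the paper handles by citing \cite{Stu}.
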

A similar monotonicity property can be proved for
$R_{\sigma}(z,\alpha)$ with $\sigma > 2$  (see also \cite{Stu}).
Indeed:
\begin{corollary}
For all $\alpha>0$ the mapping
\begin{equation}\label{R-mononoticity-periodic-sigma}
    \alpha\mapsto {\alpha}^{\frac{d}{2}}R_{\sigma}(z-\frac{\alpha gd}{4})={\alpha}^{\frac{d}{2}}\sum
    (z-\frac{\alpha gd}{4}-\lambda_j)_{+}^{\sigma}
\end{equation}
is nonincreasing and therefore for all $z\in\mathbb{R}$ and all
$\alpha>0$ the following sharp Lieb-Thirring inequality holds:
\begin{equation}\label{LTPeriodic-sigma}
R_{\sigma}(z, \alpha)
\le \alpha^{-d/2} L_{\sigma,d}^{cl} \int_M{\left(V({\bf x}) -
\left(z + \frac{gd}{4} \alpha\right)\right)_{-}^{\sigma +
d/2}d{\bf x}}.
\end{equation}
\end{corollary}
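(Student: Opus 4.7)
The plan is to reduce the statement for general $\sigma > 2$ to the case $\sigma = 2$, which is precisely Theorem \ref{LT}, by means of the Aizenman--Lieb integral identity. Starting from the elementary identity, valid for every real $s$ and every $\sigma > 2$,
\begin{equation*}
    (s)_{+}^{\sigma}= \frac{\Gamma(\sigma+1)}{2\,\Gamma(\sigma-2)}\int_0^{\infty}(s-t)_{+}^{2}\,t^{\sigma-3}\,dt,
\end{equation*}
obtained by the substitution $t=s u$ inside the beta integral $\int_0^1(1-u)^2 u^{\sigma-3}du=B(\sigma-2,3)$, summing over the eigenvalues $\lambda_j(\alpha)$ and applying Tonelli yields
\begin{equation*}
    R_{\sigma}(w,\alpha)=\frac{\Gamma(\sigma+1)}{2\,\Gamma(\sigma-2)}\int_0^{\infty} R_{2}(w-t,\alpha)\,t^{\sigma-3}\,dt.
\end{equation*}

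With this representation in hand, I would substitute $w=z-\alpha gd/4$ and multiply by $\alpha^{d/2}$ to write
\begin{equation*}
    \alpha^{d/2}R_{\sigma}\!\left(z-\tfrac{\alpha gd}{4},\alpha\right)=\frac{\Gamma(\sigma+1)}{2\,\Gamma(\sigma-2)}\int_0^{\infty}\alpha^{d/2}R_{2}\!\left((z-t)-\tfrac{\alpha gd}{4},\alpha\right) t^{\sigma-3}\,dt.
\end{equation*}
For each fixed $t\geq 0$, the integrand is, by Theorem \ref{LT} applied with $z$ replaced by $z-t$, a nonincreasing function of $\alpha$. Since the weight $t^{\sigma-3}$ is nonnegative, the monotonicity survives integration, which establishes \eqref{R-mononoticity-periodic-sigma}.

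To deduce \eqref{LTPeriodic-sigma}, I would let $\alpha_s\to 0^{+}$ in the monotonicity inequality $\alpha^{d/2}R_{\sigma}(z,\alpha)\leq \alpha_s^{d/2}R_{\sigma}\bigl(z+\tfrac{gd}{4}(\alpha-\alpha_s),\alpha_s\bigr)$ and invoke the semiclassical limit \eqref{sc-limit}. The only bookkeeping left is to verify that the combinatorial constants collapse correctly, i.e.\ that
\begin{equation*}
    \frac{\Gamma(\sigma+1)}{2\,\Gamma(\sigma-2)}\,B\!\left(\sigma-2,3+\tfrac{d}{2}\right)L_{2,d}^{cl}=L_{\sigma,d}^{cl},
\end{equation*}
which follows immediately from \eqref{sc-constant} after cancelling $\Gamma(\sigma-2)$ and $\Gamma(3+d/2)$.

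The only genuine obstacle is technical rather than conceptual: one must justify interchanging the sum over $j$ with the $t$-integral in the Aizenman--Lieb formula and the passage to the limit $\alpha_s\to 0$. Both are handled by Tonelli and monotone convergence, because all integrands are nonnegative and, on the right-hand side, dominated by the finite classical expression once $\sigma+d/2$ is large enough for $(V(\mathbf{x})-z)_{-}^{\sigma+d/2}$ to be integrable on $M$. With this integrability assumption on $V$ and $z$, the proof of the corollary is complete.
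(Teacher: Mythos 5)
Your proof is correct, and it is a legitimate, clean way to establish the corollary. The paper itself gives no argument here; it merely asserts ``a similar monotonicity property can be proved for $R_{\sigma}$ with $\sigma>2$'' and points to~\cite{Stu}, suggesting a direct repetition of the characteristic-variable/PDE argument used for $\sigma=2$: one would begin from the difference inequality in the form of Theorem~\ref{f-gap-inequality-theorem} with $f(t)=t_{+}^{\sigma}$, use the Feynman--Hellman relation $T_j=\partial\lambda_j/\partial\alpha$ again, rewrite the result as $\partial_{\alpha}\!\left(\alpha^{d/2}R_{\sigma}\right)\le \tfrac{gd}{4}\,\partial_{z}\!\left(\alpha^{d/2}R_{\sigma}\right)$ using $\partial_z R_{\sigma}=\sigma R_{\sigma-1}$, and then pass to characteristic variables exactly as in \eqref{PDI}--\eqref{UIneq}. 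You instead bootstrap from Theorem~\ref{LT} via the Aizenman--Lieb representation $(s)_{+}^{\sigma}=\tfrac{\Gamma(\sigma+1)}{2\Gamma(\sigma-2)}\int_0^{\infty}(s-t)_{+}^{2}t^{\sigma-3}\,dt$, writing $R_{\sigma}$ as an integral in $t$ of shifted $R_2$'s and observing that monotonicity in $\alpha$ is preserved under integration against the nonnegative weight $t^{\sigma-3}$. Both routes are sound; yours is arguably shorter and needs no new commutator estimate or PDE solving, while the paper's implied route stays closer to the machinery already set up for \eqref{difference1}. Worth noting: the two are not unrelated, since the paper's own extension device \eqref{f} for trace-controllable $f$ is precisely an Aizenman--Lieb-type integral representation with quadratic kernel, so your argument is morally the same idea applied one stage later. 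Two small remarks on the write-up: once you invoke the semiclassical limit \eqref{sc-limit} directly for $R_{\sigma}$, the verification of $\tfrac{\Gamma(\sigma+1)}{2\Gamma(\sigma-2)}B(\sigma-2,3+\tfrac{d}{2})L_{2,d}^{cl}=L_{\sigma,d}^{cl}$ becomes a consistency check rather than a needed step; and in the setting at hand ($M$ a fundamental cell or a compact manifold, $V$ bounded) the integrability you worry about at the end is automatic, so no further hypothesis on $V$ is required.
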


The conclusion of Theorem \ref{LT} also holds in the presence of
vector potentials. In particular, in the periodic case the
operator $\alpha(-i\mathbf{\nabla}+\mathbf{k})^2+V(\mathbf{x})$,
with $\mathbf{k}\in\mathbb{R}^d$ (more precisely, $\mathbf{k}$ in
the dual lattice) being a constant vector, satisfies the
Lieb-Thirring inequality \eqref{LTPeriodic}. Therefore, taking the
average over a band, which we define by
\begin{equation*}
    \langle \lambda_j\rangle :=\frac1{{\rm Vol}(M)}\int
    \lambda_j(\mathbf{k})\;d\mathbf{k},
\end{equation*}
and using the convexity of the function $\lambda\mapsto
(z-\lambda)_{+}^{\sigma}$ we get the estimate

\begin{equation}\label{LT-average}
   \sum
    (z-\langle \lambda_j\rangle)_{+}^{\sigma} \le \alpha^{-d/2} L_{\sigma,d}^{cl} \int_M{\left(V({\bf x}) -
\left(z + \frac{gd}{4} \alpha\right)\right)_{-}^{\sigma +
d/2}d{\bf x}}.
\end{equation}

We close the section with an application to a Schr\"{o}dinger operator with
a perturbation of a periodic potential with known semiclassical asymptotics
\cite{BiSo}. On $\mathbb{R}^d$ we consider the periodic
Schr\"{o}dinger operator with a fundamental domain $M\subset
\mathbb{R}^d$,
\begin{equation}\label{H-0-periodic}
    H_0=-\Delta +w(x),
\end{equation}
$w$ being a bounded measurable periodic function, $|w(x)|\leq w_0 < \infty$
As in \cite{BiSo} we suppose that $H_0\geq 0$ with $0$ being the
greatest lower bound. The foregoing argument yields a Lieb-Thirring inequality
for the Schr\"{o}dinger operator
\begin{equation}\label{H-0-periodic2}
    H(\alpha):=\alpha H_0+V(x).
\end{equation}
on $\mathbb{R}^d$, where $V$ is a continuous function of compact
support.

\begin{theorem}\label{LT-2}
For all $\alpha>0$ the mapping
\begin{equation}\label{R-mononoticity-periodic-2}
    \alpha\mapsto {\alpha}^{\frac{d}{2}}R_2(z-\alpha w_0)={\alpha}^{\frac{d}{2}}\sum
    (z-\alpha w_0-\lambda_j)_{+}^2
\end{equation}
is nonincreasing, and therefore for all $z\in\mathbb{R}$ and all
$\alpha>0$, the following sharp Lieb-Thirring inequality holds:
\begin{equation}\label{LTPeriodic-2}
R_2(z, \alpha)
\le \alpha^{-d/2} L_{2,d}^{cl} \int{\left(V({\bf x}) - \left(z
+\alpha w_0\right)\right)_{-}^{2 + d/2}d{\bf x}}.
\end{equation}

\end{theorem}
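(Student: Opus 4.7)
The plan is to parallel the proof of Theorem~\ref{LT} with two modifications: the parameter $g$ that arose there from averaging over a dual lattice is now driven to zero (because $H(\alpha)$ lives on all of $\R^d$ rather than on a fundamental cell), and the periodic background $\alpha w$ is absorbed via a Feynman--Hellmann estimate together with the hypothesis $|w|\le w_0$.

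First I would apply the trace identity \eqref{tf2discrete}, in its extension to operators with essential spectrum (cf.\ \cite{HaSt2}), to $H(\alpha)=\alpha(-\Delta+w)+V$ with $G=\exp(-i\mathbf{q}\cdot\mathbf{x})$. Because $w$ and $V$ are multiplication operators, they commute with $G$, so $[H(\alpha),G]=[-\alpha\Delta,G]$ and the algebra leading from \eqref{PerID} to \eqref{difference1} goes through verbatim with an overall $\alpha$-scaling. For $J=\{j:\lambda_j<z\}$ and $\mathbf{q}=q_\ell\hat e_\ell$, summing over $\ell=1,\dots,d$ and dividing by $d$ produces
\[
R_2(z,\alpha)\le \alpha g\,R_1(z,\alpha)+\frac{4\alpha}{d}\sum_j (z-\lambda_j(\alpha))_+\|\nabla\phi_j\|^2,
\]
with $g=\frac{1}{d}\sum_\ell q_\ell^2$. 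Since no periodicity restricts $\mathbf{q}$, I would let $g\downarrow 0$ to obtain
\[
R_2(z,\alpha)\le \frac{4\alpha}{d}\sum_j (z-\lambda_j(\alpha))_+\|\nabla\phi_j\|^2.
\]

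The next step is to invoke Feynman--Hellmann for $H(\alpha)$, which gives $\partial_\alpha\lambda_j=\|\nabla\phi_j\|^2+\langle\phi_j,w\phi_j\rangle$ and hence the pointwise bound $\|\nabla\phi_j\|^2\le \partial_\alpha\lambda_j+w_0$. Substituting this bound and using $\sum_j(z-\lambda_j)_+\partial_\alpha\lambda_j=-\tfrac{1}{2}\partial_\alpha R_2$ together with $\partial_z R_2=2R_1$ converts the inequality into the first-order partial-differential inequality
\[
\frac{\partial}{\partial\alpha}\bigl(\alpha^{d/2}R_2(z,\alpha)\bigr)\le w_0\,\frac{\partial}{\partial z}\bigl(\alpha^{d/2}R_2(z,\alpha)\bigr).
\]
The method of characteristics (as in the derivation of \eqref{UIneq}) then shows that $\alpha\mapsto\alpha^{d/2}R_2(z-\alpha w_0,\alpha)$ is nonincreasing, which is the monotonicity assertion of the theorem.

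To finish, I would let $\alpha\downarrow 0$ in this monotonicity property and appeal to the semiclassical limit \eqref{sc-limit}, which for $H(\alpha)=\alpha H_0+V$ still yields $L_{2,d}^{cl}\int(V(\mathbf{x})-z)_-^{2+d/2}\,d\mathbf{x}$ since the $\alpha w$ contribution disappears in the limit. This bounds $\alpha^{d/2}R_2(z-\alpha w_0,\alpha)$ above by that classical expression, and the substitution $z\leftarrow z+\alpha w_0$ delivers \eqref{LTPeriodic-2}. I expect the main technical obstacle to be rigorously justifying the trace identity and the limit $g\downarrow 0$ in the presence of essential spectrum; this should follow from Theorem~\ref{abstract-trace-general-theorem} with $P$ the spectral projector onto eigenvalues below $z$, since the continuous-spectrum contributions to the right-hand side of \eqref{tf2discrete} retain the correct sign as long as $z$ lies below the essential spectrum of $H(\alpha)$.
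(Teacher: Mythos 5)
Your proposal is correct and follows essentially the same route as the paper, which states that Theorem~\ref{LT-2} follows from "the foregoing argument" (i.e., the proof of Theorem~\ref{LT}) and leaves the details implicit. You have correctly filled in those details: the commutator algebra with $G=e^{-i\mathbf{q}\cdot\mathbf{x}}$ reduces to $[-\alpha\Delta,G]$ because $\alpha w+V$ commutes with $G$; the lattice restriction on $\mathbf{q}$ is now absent, so $g\downarrow 0$; Feynman--Hellmann for $H(\alpha)=\alpha(-\Delta+w)+V$ gives $\partial_\alpha\lambda_j=\|\nabla\phi_j\|^2+\langle\phi_j,w\phi_j\rangle$, and $|w|\le w_0$ converts this into $\|\nabla\phi_j\|^2\le\partial_\alpha\lambda_j+w_0$, which plays exactly the role that $T_j=\partial_\alpha\lambda_j$ (with the $g$-shift) played in Theorem~\ref{LT}; and the resulting PDI $\partial_\alpha U\le w_0\,\partial_z U$ with $U=\alpha^{d/2}R_2$ gives the stated monotonicity by characteristics, with the $\alpha\downarrow 0$ endpoint supplied by the Birman--Solomyak semiclassical asymptotics cited in the paper.
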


\section{Remarks on the commutation of self-adjoint and unitary operators}
\label{UnitSection}

In Section \ref{PeriodicSection} the operator $G$ that was chosen to commute with
the self-adjoint operator $H$ was unitary, and it is reasonable to think that
this property alone accounts for some of the simplifications that were achieved
in comparison with the general trace inequalities of Section \ref{IDSection}.
In this section we
choose $G = U$ as a unitary operator and explore some consequences and
additional applications.

We define
\begin{equation}\label{H_U}
H_U := U^*[H,U] = U^*HU - H.
\end{equation}
Then $HU  = U[H,U^*]$, and we may rewrite the trace formula
\eqref{tf2} as follows:
\begin{align}\label{U.2}
&  \quad tr\left((z-H)^2(H_U + H_{U^*})P\right) - tr\left((z-H)(H_U^2 + HUH_{U^*}^2)P\right) \nonumber
\\
& \quad  \quad = \nonumber \\
&  \quad tr\left((z-H)A(z-H)^2A^* - (z-H)A^* (z-H)^2 A\right)  + \nonumber \\
& \quad  \quad tr\left((z-H)B^*(z-H)^2 B - (z-H) B (z-H)^2B^*\right).
\end{align}

As before, if the spectrum of
$H$ consists only of eigenvalues $\lambda_j$ and the corresponding
eigenfunctions $\phi_j$ are chosen to form an orthonormal basis
of the underlying Hilbert space ${\mathcal H}$,
then \eqref{U.2} reads as follows:

\begin{align}\label{U.3}
&  \quad \sum_{\lambda_j \in J}{(z-\lambda_j)^2
\left\langle(H_U + H_{U^*})\phi_j, \phi_j\right\rangle} - \nonumber \\
& \quad \quad \sum_{\lambda_j \in J}{(z-\lambda_j)\left(\left\langle(H_U \phi_j, H_U \phi_j\right\rangle
+ \left\langle(H_{U^*} \phi_j, H_{U^*} \phi_j\right\rangle \right)} \nonumber
\\
& \quad  \quad = \nonumber \\
&  \quad \sum_{\lambda_j \in J}{\sum_{\lambda_k \notin J}{(z-\lambda_j)(z-\lambda_k)
(\lambda_k-\lambda_j)\left(\left|\left\langle(U \phi_j, \phi_k\right\rangle\right|^2+\left|\left\langle(U^* \phi_j, \phi_k\right\rangle\right|^2\right)}}.
\end{align}

Equivalently, this may be written as
\begin{align}\label{U.4}
&  \quad \sum_{\lambda_j \in J}{\|(z-H) \phi_j\|^2
\left\langle((2 z - U H U^* - u^* H U )\phi_j, \phi_j\right\rangle} - \nonumber \\
& \quad \quad \sum_{\lambda_j \in J}{\left\langle (z-H) \phi_j, \phi_j \right\rangle
\left(\|(z- U H U^*) \phi_j\|^2
+ \|(z- U^* H U) \phi_j\|^2 \right)} \nonumber
\\
& \quad  \quad = \nonumber \\
&  \quad \sum_{\lambda_j \in J}{\sum_{\lambda_k \notin J}{(z-\lambda_j)(z-\lambda_k)
(\lambda_k-\lambda_j)\left(\left|\left\langle(U \phi_j, \phi_k\right\rangle\right|^2+\left|\left\langle(U^* \phi_j, \phi_k\right\rangle\right|^2\right)}}.
\end{align}

In Section \ref{PeriodicSection}  we used the auxiliary unitary operator
$U = e^{-i\mathbf{q}\cdot\mathbf{x}}$ to derive identities for
for periodic Schr\"odinger operators.  As another illustration
we turn to the case of Schr\"odinger operators on manifolds
immersed in $\R^{d+1}$, which was studied by
commutation with self-adjoint operators
based on coordinate functions in \cite{Har,EHI}.

Consider a Schr\"odinger operator $H = - \Delta + V({\bf x})$, where
$-\Delta$ denotes the Laplace-Beltrami operator, on
a domain
$\Omega$ in a smooth, orientable, $d$-dimensional
manifold ${\mathcal M}$
isometrically immersed in $\R^{d+1}$.  (Higher codimensions could also
be dealt with as in \cite{EHI}, but
for simplicity we treat only the case of codimension 1.)  If
$\Omega$
has a boundary, Dirichlet conditions are imposed.  It is assumed that the potential
$V \in L_{\rm loc}^1$, and other conditions are tacitly placed on $V$ and
$\Omega$ so that $H$ is self-adjoint by closure of $C_c^\infty({\mathcal M})$ with at least some discrete,
finitely degenerate eigenvalues $\{\lambda_j\}$ at the bottom of the spectrum.
In order to apply a trace identity we choose $U$ as the multiplicative
operator obtained by
restricting $e^{-i\mathbf{q}\cdot\mathbf{x}}$ to values of $\mathbf{x} \in {\mathcal M}
\subset \R^{d+1}$.  We then calculate:

\begin{equation}\label{firstgeomcomm}
H_U = - 2 i {\bf q}_{\|} \cdot \nabla_{\|} - i {\bf q}\cdot{\bf h} + |{\bf q}_{\|}|^2.
\end{equation}
Here, ${\bf q}_{\|}$ and $\nabla_{\|}$ are the tangential parts of
${\bf q}$ and the gradient, while
the mean-curvature vector
${\bf h} = \left(\sum_{\beta}^{}{\kappa_{\beta}}\right) {\bf n}$
is parallel to the unit normal
${\bf n}$.
Using \eqref{U.3}, the analogue of \eqref{PerID} is

\begin{equation}
\begin{split}
    &|{\bf q}_{\|}|^2 \sum_{\lambda_j\in J}  (z-\lambda_j)^2
    -\sum_{\lambda_j\in
    J}(z-\lambda_j)\,\left(|{\bf q}_{\|}|^4 + 4 \|{\bf q}_{\|} \cdot \nabla_{\|} \phi_j\|^2+ \|{\bf q} \cdot {\bf h}\, \phi_j\|^2 \right)\\
    &\quad\quad =\\
    &\sum_{\lambda_j\in J}\sum_{\lambda_k\notin J}
    (z-\lambda_j)(z-\lambda_k)(\lambda_k-\lambda_j)w_{jk{\bf q}}\\
    \end{split}
\end{equation}
for some positive quantities ${w}_{jk{\bf q}}$ whose formal
expression is identical to the ones in
\eqref{PerID}.  To simplify this expression, one
can sum as before for ${\bf q}$ taken
from a frame of the form $\{q \, {\bf e}_{\beta}\}$, $\beta = 1, \dots d+1$.
(The same conclusion could alternatively be attained by fixing $|{\bf q}|$ and
averaging over all directions.)  The result, after a bit of calculation, is

\begin{equation}\label{GeomID}
\begin{split}
   & \sum_{\lambda_j\in J}{(z - \lambda_j)^2} - q^2 \sum_{\lambda_j\in J}{(z - \lambda_j)} - \frac{4}{d} \sum_{\lambda_j\in J}{(z - \lambda_j)
   \left({ \left\langle{\phi_j, \left(- \Delta + \frac{h^2}{4} \right) \phi_j}\right\rangle}\right)}\\
    & \quad \quad=\\
    &\sum_{\lambda_j\in J}\sum_{\lambda_k\notin J}
    (z-\lambda_j)(z-\lambda_k)(\lambda_k-\lambda_j)\frac{{\rm Ave}_{{\bf q}}\left(w_{jk{\bf q}}\right)}{q^2 d}.\\
    \end{split}
\end{equation}
In this identity it was convenient to assume a purely discrete spectrum,
although in fact only $J$ needs to be a discrete set, if the sum over $J^c$ is replaced
by the appropriate spectral integral.

The situation of greatest interest is
when $J = \{\lambda_1, \dots \lambda_n\} < \inf{J^c}$,
in which case the term on the right is nonpositive, and we may let $q \to 0$, yielding

\begin{equation}\label{GeomIneq}
R_2(z) \le \frac{4}{d} \sum_{\lambda_j\in J}{(z - \lambda_j)
   \left({ \left\langle{\phi_j, \left(- \Delta + \frac{h^2}{4} \right) \phi_j}\right\rangle}\right)}
\end{equation}
for all $z$, or, in the case where the spectrum is not purely discrete,
$z \le \inf{\sigma_{\rm ess}}$.
As remarked in Section \ref{UniversalRiesz}, this implies Inequality \eqref{differential1} and thereby
Theorem \ref{monotone1}, again in the case of purely discrete spectrum.
We observe that for the monotonicity part of
Theorem \ref{monotone1} 
it is not necessary for the manifold $M$ to be bounded or of finite volume, as long as the bottom part of the spectrum is discrete and $z$ lies below that.

A basic question about the spectral geometry of immersed manifolds has to do with
extensions of the Reilly inequality \cite{Rei,ElIl86,ElIl92,ElIl00,EHI},
whereby the eigenvalues of the Laplace-Beltrami operator are
bounded from above in terms of mean curvature.  The classic Reilly Inequality
says that the first non-trivial eigenvalue, which in our notation is $\lambda_2$,
is bounded by $\frac{\|h\|_{\infty}^2}{d}$.
In \cite{EHI}, Corollary 2.3,
it was shown that
every eigenvalue
$\lambda_N$ of the Laplace-Beltrami operator on closed immersed manifolds
satisfies an upper bound of the form $C_R(d,N) \|h\|_{\infty}^2$, but
unfortunately the constant $C_R(d,N)$ produced there
grows exponentially with $N$.

For $N>1$ the Reilly bound
on $\lambda_{N+1}$ can be improved in a form that grows as $N^{2/d}$, the
power expected from the Weyl law:

\begin{theorem}\label{Reilly}
Let a smooth, compact, $d$-dimensional
manifold ${\mathcal M}$, of finite volume and without boundary, be
immersed in $\R^{d+1}$.  Let $0 = \lambda_1 < \lambda_2 \le \dots$
denote the eigenvalues of the Laplace-Beltrami operator
on ${\mathcal M}$.  Then for
each $N$,
\begin{equation}\label{ReillyIneq}
\lambda_{N+1} \le \left( \frac{(d+4)^2}{d (d+2)} N^{2/d} - \frac 4 d \right)
        \frac{\|h\|_{\infty}^2}{d}.
\end{equation}
\end{theorem}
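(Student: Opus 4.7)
The plan is to combine two ingredients that are already in place in the paper: the trace inequality \eqref{difference1}, which for the Laplace-Beltrami operator on $\mathcal{M}$ specializes (via \eqref{GeomIneq} in Section \ref{UniversalRiesz}) to the form with $V\equiv 0$ and $g=\|h\|_\infty^2/d$; and Corollary \ref{Leg1}, which controls the means of the eigenvalues once one knows $\lambda_1=0$.

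First I would revisit the quadratic-polynomial argument of Section \ref{PeriodicSection} that yields the universal Yang-type bound \eqref{upperbd}, now with $V\equiv 0$, $g=\|h\|_\infty^2/d$ and shift $\tau:=\|h\|_\infty^2/4$. Working in the shifted variables $\mu_j:=\lambda_j+\tau$, a direct calculation shows that the discriminant $\mathfrak D_N$ of the quadratic \eqref{quadpoly2} equals $(4/d^2)(\overline{\lambda_N}+\tau)^2-(1+4/d)\,\mathrm{Var}(\mu)$, so dropping the nonnegative variance term gives $\sqrt{\mathfrak D_N}\le 2(\overline{\lambda_N}+\tau)/d$. This is the same kind of estimate as \eqref{z0n}, and carrying the shift through \eqref{upperbd} collapses it into the clean Yang-type form
\begin{equation*}
\lambda_{N+1}+\tau\le \frac{d+4}{d}\bigl(\overline{\lambda_N}+\tau\bigr).
\end{equation*}

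Second, because $\mathcal{M}$ is compact without boundary, the constant function is a Laplace-Beltrami eigenfunction with $\lambda_1=0$, so $\overline{\lambda_1}+\tau=\tau$. Inequality \eqref{LegP9} of Corollary \ref{Leg1}, applied with $j=1$, therefore yields
\begin{equation*}
\overline{\lambda_N}+\tau\le \frac{d+4}{d+2}\,N^{2/d}\,\tau.
\end{equation*}
Combining the two displayed inequalities gives
\begin{equation*}
\lambda_{N+1}+\tau\le \frac{(d+4)^2}{d(d+2)}\,N^{2/d}\,\tau,
\end{equation*}
and substituting $\tau=\|h\|_\infty^2/4$ and rearranging in the units $\|h\|_\infty^2/d$ used in the statement produces the bound \eqref{ReillyIneq}.

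The principal technical step is the shifted Yang-type inequality of the first paragraph. The estimate \eqref{z0n} was derived in the proof of Corollary \ref{Leg1} under the convention $\tau=0$, so one must carefully carry the shift by $\tau$ through the discriminant calculation in \eqref{Disc}; this is purely algebraic but easy to get wrong. The remainder of the argument is routine algebra combined with the essential geometric input $\lambda_1=0$, which is what allows Corollary \ref{Leg1} to be anchored at $j=1$ and thereby produce the $N^{2/d}$ growth rate that supplants the exponential $C_R(d,N)$ of \cite{EHI}.
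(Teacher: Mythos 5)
Your overall route -- Cauchy--Schwarz on the discriminant of \eqref{quadpoly2} to get a shifted Yang-type bound, then \eqref{LegP9} anchored at $j=1$ using $\lambda_1=0$, then multiply -- is exactly the route the paper takes, and your intermediate steps are all correct. In fact you are more careful than the paper, which defines a \emph{local} $\tau=\|h\|_\infty^2/d$ (i.e.\ $g$) in the proof of Theorem~\ref{Reilly} and then feeds that same $\tau$ into \eqref{LegP9}, whereas Corollary~\ref{Leg1} is stated in terms of the $\tau$ of Theorem~\ref{monotone1}, which for $V\equiv 0$ is $gd/4=\|h\|_\infty^2/4$. You use the correct value $\tau=\|h\|_\infty^2/4$ throughout. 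Your formula for the discriminant, $\mathfrak D_N=(4/d^2)(\overline{\lambda_N}+\tau)^2-(1+4/d)\,\mathrm{Var}(\mu)$, checks out, and the resulting shifted Yang inequality $\lambda_{N+1}+\tau\le\frac{d+4}{d}(\overline{\lambda_N}+\tau)$ is equivalent, after unpacking the shift, to the paper's \eqref{weakerupperbd} $\lambda_{N+1}\le(1+\tfrac4d)\overline{\lambda_N}+g$.

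However, the final sentence of your proposal contains a genuine arithmetic slip: substituting $\tau=\|h\|_\infty^2/4$ into
\begin{equation*}
\lambda_{N+1}\le\Big(\tfrac{(d+4)^2}{d(d+2)}N^{2/d}-1\Big)\tau
\end{equation*}
and expressing the answer in units of $\|h\|_\infty^2/d$ does \emph{not} give \eqref{ReillyIneq}. Since $\tau=\tfrac d4\cdot\tfrac{\|h\|_\infty^2}{d}$, what you actually obtain is
\begin{equation*}
\lambda_{N+1}\le\Big(\tfrac{(d+4)^2}{4(d+2)}N^{2/d}-\tfrac d4\Big)\tfrac{\|h\|_\infty^2}{d},
\end{equation*}
which agrees with \eqref{ReillyIneq} only when $d=4$; the coefficient of $N^{2/d}$ is $(d+4)^2/4(d+2)$, not $(d+4)^2/d(d+2)$, and the subtracted constant is $d/4$, not $4/d$. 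It is worth observing that your careful bookkeeping actually exposes a discrepancy between Corollary~\ref{Leg1} and the constants stated in \eqref{ReillyIneq}: the paper's own proof reaches \eqref{ReillyIneq} only by (implicitly) applying \eqref{LegP9} with $\tau=g$ rather than $\tau=gd/4$, which is not the $\tau$ of Corollary~\ref{Leg1}. So the gap in your writeup is merely the claim that your bound ``produces \eqref{ReillyIneq}''; your derived bound is what the paper's machinery actually yields, and the right fix is to state that bound rather than force-fit the constants of \eqref{ReillyIneq}.
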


\begin{proof}
We start with \eqref{quadpoly2} as adapted to this situation, {\it viz.},
\begin{equation}
z^2 - \left(\left(2 + \frac 4 d \right) \overline{\lambda_N} + \tau \right) z +
\left(1 + \frac 4 d \right) \overline{\lambda_N^2} + \tau \overline{\lambda_N} \le 0,
\end{equation}
where $\tau =  \frac{\|h\|_{\infty}^2}{d}$ and $z \in (\lambda_k, \lambda_{k+1}]$,
and the corresponding specialization of the upper bound \eqref{upperbd}.
In a standard fashion we use Cauchy-Schwarz to replace
$\overline{\lambda_k^2} \ge \overline{\lambda_k}^2$
and weaken \eqref{upperbd} to:
\begin{equation}\label{weakerupperbd}
\lambda_{N+1} \le
\left(1 + \frac 4 d \right) \overline{\lambda_N} + \tau.
\end{equation}
As was already noted in \cite{EHI}, the case $N=1$ reproduces the classic Reilly inequality.  In order to bound higher eigenvalues, we now combine
\eqref{weakerupperbd} with \eqref{LegP9}, choosing $k=N$ and
$j=1$.  Recalling that $\lambda_1 = 0$,

\begin{equation}
\overline{\lambda_{N}} \le   \left( \frac{d + 4}{d + 2} N^{\frac 2 d}  - 1\right) \tau.
\end{equation}
When this is substituted into \eqref{weakerupperbd}, we obtain \eqref{ReillyIneq}.
\end{proof}

As a final application of commutation with unitaries,
consider the integral operator $H$
defined on $L^2(\mathbb{R}^d)$ by
\begin{equation}\label{H-integral-operator}
    (Hf)(p)=T(p)f(p)+\int_{\mathbb{R}^d}V(p-p')f(p')\;dp',
\end{equation}
and let $U$ be the translation operator $(Uf)(p)=f(p-k)$ for some
$k\in \mathbb{R}^d$.
This is in a sense dual to the situation given above, as
the unitary operator of multiplication by
$e^{-i\mathbf{q}\cdot\mathbf{x}}$
corresponds to translation in the momenta by $\mathbf{q}$.
Then
\begin{equation*}
    (H_Uf)(p)=(T(p+k)-T(p))f(p).
\end{equation*}
For simplicity we assume that the spectrum of $H$ consists only of
eigenvalues. Applying the trace formula \eqref{U.4} we
get
\begin{equation}\label{tf-U-integral-operator}
    \begin{split}
    &\sum_{\lambda_j\in J}  (z-\lambda_j)^2\,\int_{\mathbb{R}^d}\big(T(p+k)+T(p-k)-2T(p)\big)|\phi_j(p)|^2\;dp\\
    &-\sum_{\lambda_j\in J}(z-\lambda_j)\,\int_{\mathbb{R}^d}\big((T(p+k)-T(p))^2+(T(p-k)-T(p))^2\big)|\phi_j(p)|^2\;dp\\
    &\quad\quad=\\
    &\sum_{\lambda_j\in J}\sum_{\lambda_k\notin J}
    (z-\lambda_j)(z-\lambda_k)(\lambda_k-\lambda_j)\bigg(\bigg|\int_{\mathbb{R}^d}\phi_j(p-k)\phi_k^*(p)\;dp\,\bigg|^2
    +\bigg|\int_{\mathbb{R}^d}\phi_j(p+k)\phi_k^*(p)\;dp\,\bigg|^2\bigg).\\
    \end{split}
\end{equation}
One possibility to exploit this identity is to do a Taylor
expansion about $k=0$. We obtain the corresponding trace formula
for a self adjoint operator $G$ with $G$ being the generator of
the unitary group of translations in momentum space.
Indeed, for $C^2$ functions $T(p)$ we have
\begin{equation*}
    T(p+k)+T(p-k)-2T(p)= k(D^{(2)}T)(p)k+O(|k|^3)
\end{equation*}
and
\begin{equation*}
    (T(p+k)-T(p))^2+(T(p-k)-T(p))^2=2\big(\nabla T(p)k\big)^2+O(|k|^3),
\end{equation*}
and therefore, after division by $|k|^2$,
\begin{equation}\label{tf-U-integral-operator-k=0}
    \begin{split}
    &\sum_{\lambda_j\in J}  (z-\lambda_j)^2\,\int_{\mathbb{R}^d}k^{T}(D^{(2)}T)(p)k|\phi_j(p)|^2\;dp-2(z-\lambda_j)\,\int_{\mathbb{R}^d}\big((\nabla T(p))k\big)^2|\phi_j(p)|^2\;dp\\
    &\quad\quad =\\
    &2\sum_{\lambda_j\in J}\sum_{\lambda_k\notin J}
    (z-\lambda_j)(z-\lambda_k)(\lambda_k-\lambda_j)\bigg|\int_{\mathbb{R}^d}\phi_k^*(p)\nabla\phi_j(p)\;dp\,\bigg|^2.\\
    \end{split}
\end{equation}
If $T(p)=\alpha p^2$, then
\begin{equation*}
    T(p+k)+T(p-k)-2T(p)= 2\alpha k^2,\quad
\end{equation*}
and
\begin{equation*}
    (T(p+k)-T(p))^2+(T(p-k)-T(p))^2=8\alpha^2(pk)^2+2\alpha^2k^4.
\end{equation*}
If $T(p)=\sqrt{p^2+m^2}-m$, then $T(p)^2=p^2-2mT(p)$, and
\begin{equation*}
    T(p+k)+T(p-k)-2T(p)= \frac{m^2k^2+p^2k^2-(kp)^2}{\sqrt{p^2+m^2}^3}+O(|k|^3),\quad
\end{equation*}
and
\begin{equation*}
    (T(p+k)-T(p))^2+(T(p-k)-T(p))^2=\frac{2(pk)^2}{{p^2+m^2}}+O(|k|^3).
\end{equation*}
For related work on this relativistic kinetic energy operator we
refer to \cite{HaYi}.

\section{Examples}\label{Examples}
The study of the distribution of positive quadratic forms on
vectors of integers is sometimes referred to as the geometry of
numbers (e.g., \cite{Skr}).  The values of any such quadratic form
compose the spectrum of the Laplacian on a certain flat torus,
the dimensions of which determine the coefficients of the
quadratic form, or conversely.  Therefore the inequalities of the
preceding section have direct implications for the geometry of
numbers.  We begin this section by presenting those.

We note that $\sum^d_{\alpha=1} T_{{\bf \hat e}_\alpha j} = 2d+
8\|\nabla u_j\|^2=2d+8T_j$. In the case of the Laplacian,
$T_j=\lambda_j$ and $\lambda_j=4\pi^2\sum^d_{\alpha=1} n^2_{\alpha
j}$. Exploiting the abstract gap formula of Theorem
\ref{gap-condition}, after some simplification we find the following
inequality:
\begin{equation}\label{quadratic-ineq-1d-periodic}
   P_{2,N}(z):= \sum_{j=1}^{N}(z-\lambda_{j})(z-\frac{d+4}{d}\lambda_{j}-g)\leq
    N(z-\lambda_{N})(z-\lambda_{N+1})
\end{equation}
for all $z\in[\lambda_N,\lambda_{N+1}]$. We recall that $g :=
\frac{1}{d} \sum_{\alpha=1}^d{q_{\alpha}^2}$.

Analyzing the foregoing inequality following \cite{HaSt1}, we get

\begin{equation*}
   \bigg( \frac{\lambda_{N+1}-\lambda_{N}}{2}\bigg)^2\leq
   D_N:=\bigg(\frac{d+2}{d}\overline{\lambda_N}+\frac{g}{2}\bigg)^2-\frac{d+4}{d}\overline{\lambda_N^2}-g\overline{\lambda_N}
\end{equation*}
and
\begin{equation*}
   \frac{d+2}{d}\overline{\lambda_N}+\frac{g}{2}-\sqrt{D_N}\leq
   \lambda_{N}\leq \lambda_{N+1}\leq
   \frac{d+2}{d}\overline{\lambda_N}+\frac{g}{2}+\sqrt{D_N}.
\end{equation*}

As a first illustration we consider the case $d=1$. Obviously, we
want to choose $g$ as small as possible and the best choice is the
first nontrivial eigenvalue of the periodic Laplacian, i.e.
$g=4\pi^2$. Let $n$ ne a natural number and set $N:=2n+1$. Then
$\lambda_N=n^2$ and $\lambda_{N+1}=(n+1)^2$, which means that there
is a gap. We easily verify that in this case
\begin{equation*}
    D_N=\bigg(
    \frac{\lambda_{N+1}-\lambda_{N}}{2}\bigg)^2=\pi^2N^2.
\end{equation*}
Consequently, the quadratic polynomials (in $z$) on the right and left
sides of \eqref{quadratic-ineq-1d-periodic}
coincide for these values of $N$. Since (see also
\eqref{R-P-relation})

\begin{equation*}
    \frac{d}{dz}\;(z+\pi^2)^{-2-\frac{1}{2}}\sum_{j=1}^N(z-\lambda_j)^{2}=-\frac{1}{2}\;\sum_{j=1}^{N}(z-\lambda_{j})(z-5\lambda_{j}-4\pi^2)(z+\pi^2)^{-3-\frac{1}{2}},
\end{equation*}
we conclude that the nondecreasing function
\begin{equation*}
    z\mapsto (z+\pi^2)^{-2-\frac{1}{2}}R_2(z)
\end{equation*}
has critical points at the eigenvalues $\lambda_{j}$. Therefore
the positive shift $gd/4=\pi^2$ in $z$ cannot be replaced by any
smaller shift without losing the monotonicity property.

Next consider the two-dimensional Laplacian with periodic boundary
conditions on the square $Q=[0,2\pi]\times [0,2\pi]$. Its
eigenvalues are $m^2+n^2,m,n\in\mathbb{Z}$ with corresponding
eigenfunctions $\phi_{m,n}(x)=(2\pi)^{-1}\exp(imx+iny)$. The
counting function $N=N(x)$ counts the number of lattice points
inside the disc $D_x$ of radius $\sqrt{x}$ centered at the origin,
a sharp estimate of 
which is known in the literature on lattice points as the
{\it Gauss circle
problem} (see e.g. \cite{Kra}). Here we
only consider the bounds obtained from Theorem \ref{monotone1} and
the general inequality \eqref{quadratic-ineq-1d-periodic},
respectively. We follow \cite{Kra}, where, in place of
$N(x)$ the common notation is
\begin{equation*}
    R(x):= \# \{(m,n)\in\mathbb{Z}\times \mathbb{Z}: m^2+ n^2\leq x\},
\end{equation*}
which counts the lattice points inside the disk $D_x$ of
radius $x$ centered at the origin. Since
\begin{equation*}
    \sum_{m^2+n^2\leq x}
    f(m^2+n^2)=R(x)f(x)-\int_0^xf'(t)R(t)\;dt,
\end{equation*}
it follows that
\begin{equation*}
   R_2(x)=2\int_0^x(x-t)R(t)\;dt.
\end{equation*}
The bound of Theorem \ref{monotone1} reads as follows:
\begin{equation*}
    R_2(x)\leq \frac{\pi}{3}(x+\frac1{2})^3.
\end{equation*}
Defining, as in \cite{Kra}, the fluctuation about the Weyl
asymptotics by
\begin{equation*}
    R_2(x)-\frac{\pi}{3} x^3=2\Delta_2(x),
\end{equation*}
we find that
\begin{equation*}
    \Delta_2(x)\leq \frac{\pi}{48}(12x^2+6x+1),
\end{equation*}
which has to be compared with the
standard asymptotic estimate \cite{Kra}
\begin{equation*}
    \frac{|\Delta_2(x)|}{x^{\frac{5}{4}}}\leq C
\end{equation*}
for some positive constant $C$. Our estimate is only one sided and
too crude for large $x$.

Finally, we test the sharpness of our Lieb-Thirring inequalities for periodic
Schr\"{o}dinger operators. Consider the case of
$H(\alpha)=-\alpha\Delta -\gamma$ on $[0,2\pi]$ with periodic
boundary conditions, where $\gamma$ is some positive constant. Its
eigenvalues are $\lambda_j=\alpha j^2-\gamma$ with
$j\in\mathbb{Z}$. By Theorem \ref{LT}, the function
\begin{equation*}
    \alpha\mapsto \sqrt{\alpha}\sum
    (z-\frac{\alpha}{4}-\lambda_j)_{+}^2
\end{equation*}
is nondecreasing and therefore by \eqref{LTPeriodic} the following
Lieb-Thirring inequality holds:
\begin{equation*}
    \sqrt{\alpha}\sum
    (z-\alpha j^2+\gamma)_{+}^2\leq
    \frac{16}{15}(-\gamma-\frac{\alpha}{4}-z)_{-}^{\frac{5}{2}}.
\end{equation*}
In particular, taking $z=0$ we have
\begin{equation*}
    \sum (\frac{\gamma}{\alpha}- j^2)_{+}^2\leq
    \frac{16}{15}(\frac{\gamma}{\alpha}+\frac{1}{4})^{\frac{5}{2}}.
\end{equation*}
As in our first example we see that the shift $\alpha/4$ in $z$
cannot be replaced by any smaller shift without losing the
monotonicity property (choose $\gamma/\alpha=m^2$ for an integer
$m$). The presence of the shift is due to the zero eigenvalue.
Indeed, if $\gamma/\alpha<1$ we have
\begin{equation*}
  \big(\frac{\gamma}{\alpha}\big)^2\leq
  \frac{16}{15}(\frac{\gamma}{\alpha}+\frac{1}{4})^{\frac{5}{2}}
\end{equation*}
and without a shift this inequality clearly cannot be true.

\subsection*{Acknowledgments}
We are grateful to the Centre Interfacultaire Bernoulli of the EPF
Lausanne and the Mathematisches Forschungsinstitut Oberwolfach for
hospitality and to Lotfi Hermi and Rupert Frank for remarks and
references.

\end{document}